\documentclass[10pt, a4paper, reqno]{amsart}

\usepackage[utf8]{inputenc}

\usepackage[english]{babel}

\usepackage{amsmath}
\usepackage{amsfonts}
\usepackage{amssymb}
\usepackage{amsthm}
\usepackage{hyperref}
\usepackage{graphicx}

\usepackage{enumerate}

\usepackage[numbers]{natbib}

\nocite{*}

\usepackage[left=3cm, right=3cm, top=3.5cm, bottom=4cm]{geometry}

\theoremstyle{plain}
\newtheorem{theorem}{Theorem}[section]
\newtheorem{corollary} {Corollary}[section]

\theoremstyle{definition}
\newtheorem{remark}{Remark}[section]

\numberwithin{equation}{section}


\title{On the distribution of the maximum of the telegraph process}

\author{F. Cinque and E. Orsingher$^1$}
\address{$^1$Department of Statistical Sciences, Sapienza University of Rome, Italy.}
\email{cinque.1700526@studenti.uniroma1.it, enzo.orsingher@uniroma1.it}
\date{\scriptsize \texttt{\today}}

\begin{document}

\begin{abstract}
In this paper we present the distribution of the maximum of the telegraph process in the cases where the initial velocity is positive or negative with an even and an odd number of velocity reversals. For the telegraph process with positive initial velocity a reflection principle is proved to be valid while in the case of an initial leftward displacement the conditional distributions are perturbed by a positive probability of never visiting the half positive axis.
\\Various relationships are established among the mentioned four classes of conditional distributions of the maximum.
\\The unconditional distributions of the maximum of the telegraph process are obtained for positive and negative initial steps as well as their limiting behaviour. Furthermore the cumulative distributions and the general moments of the conditional maximum are presented.
\end{abstract}
\maketitle

\keywords{\small \textbf{Keywords}: Telegraph Process, Induction Principle, Bessel Functions}

		\section{Introduction}

The telegraph process has been investigated by several researchers starting from the beginning of the Fifties of the past century.
The asymmetric telegraph process was studied by V.Cane (1975) and its explicit distribution obtained in Beghin et al. (2001) with two different approaches, one based on relativistic transformations, the other one by using the Fourier transforms of the governing equation.
\\In this paper we present a general picture of the distribution of the maximum of the homogeneous telegraph process, denoted throughout by $\mathcal{T}(t)$, $t\ge 0$, with a different initial speed $V(0)=\pm c$, $c>0$.
\\
The problem of finding the distribution
\begin{equation}\label{max1}
 P\lbrace \max_{0\le s\le t} \mathcal{T}(s) \in d\beta\ |\ V(0) = c, N(t) = 2k+1\rbrace
\end{equation}
for $k\in \mathbb{N}_0,\ 0<\beta<ct$, was firstly undertaken in Orsingher (1990), for some specific values of $k$. The obtained partial results inspired the formulation of the conjecture that for $0 < \beta < ct , \ k\ge 0$
\begin{equation}\label{congettura}
P\lbrace \max_{0\le s\le t} \mathcal{T}(s) \in d\beta\ |\ V(0) = c, N(t) = 2k+1\rbrace = \frac{B(k)(c^2t^2 - \beta^2)^k}{(ct)^{2k+1}}d\beta
\end{equation}
where $B(k)$ is the normalising constant.
\\The conjecture was proved by Foong (1992) and Foong and Kanno (1994). The starting point for proving (\ref{congettura}) was the derivation of the first-passage time from a Darling-Siegert's relationship by means of the Laplace transform.
\\Other works on this topic are due to Stadje and Zacks (2004) and Zacks (2004).
Our approach is based on the direct derivation of conditional distributions like (\ref{max1}) by means of the induction principle exploiting markovianity and homogeneity with respect to the Poisson times related to the reversals of velocities.
\\In this way we show that for $k\ge 0,\ 0<\beta<ct$
\begin{equation}\label{leggemax+d}
P\lbrace \max_{0\le s\le t} \mathcal{T}(s) \in d\beta\ |\ V(0) = c, N(t) = 2k+1\rbrace = 2\frac{(2k+1)!}{k!^2} \frac{(c^2t^2 - \beta^2)^k}{(2ct)^{2k+1}}d\beta
\end{equation}
and
\begin{equation}\label{ripmax+d}
P\lbrace \max_{0\le s\le t} \mathcal{T}(s) < \beta\ |\ V(0) = c, N(t) = 2k+1\rbrace = \frac{\beta}{ct} \sum_{j = 0}^k \binom{2j}{j}\Bigl(\frac{\sqrt{c^2t^2-\beta^2}}{2ct} \Bigr)^{2j}
\end{equation}
From (\ref{leggemax+d}) we also have that
\begin{equation}
P\lbrace \max_{0\le s\le t} \mathcal{T}(s) \in d\beta, N(t) \ odd\ |\ V(0) = c\rbrace = e^{-\lambda t}\frac{\lambda}{c}I_0\Bigl(\frac{\lambda}{c}\sqrt{c^2t^2-\beta^2}\Bigr)d\beta
\end{equation}
where $N(t)$, $t\ge 0$, is the number of Poisson events occured up to time $t$.
\\

Since
\begin{equation}\label{intromax+p}
P\lbrace \max_{0\le s\le t} \mathcal{T}(s) \in d\beta\ |\ V(0) = c, N(t) = 2k+2\rbrace = 2\frac{(2k+1)!}{k!^2} \frac{(c^2t^2 - \beta^2)^k}{(2ct)^{2k+1}}d\beta
\end{equation}
we have that
\begin{equation}
P\lbrace \max_{0\le s\le t} \mathcal{T}(s) \in d\beta, N(t) \ even \ |\ V(0) = c\rbrace =\frac{ e^{-\lambda t}}{c}\frac{\partial}{\partial t}I_0\Bigl(\frac{\lambda}{c}\sqrt{c^2t^2-\beta^2}\Bigr)d\beta
\end{equation}
so that
$$P \lbrace \max_{0\le s\le t} \mathcal{T}(s)  \in d \beta \ |\ V(0) = c\rbrace  =  $$
\begin{equation}\label{max+intro}
 = \frac{e^{-\lambda t}}{c} \Biggl[\lambda I_0\Bigl( \frac{\lambda }{c} \sqrt{c^2t^2 -\beta^2} \Bigr) + \frac{\partial }{\partial t}  I_0\Bigl( \frac{\lambda }{c} \sqrt{c^2t^2 -\beta^2} \Bigr) \Biggr]d\beta \ =\ 2P \lbrace  \mathcal{T}(t)  \in d \beta\rbrace
\end{equation} 
\\
Formula (\ref{max+intro}) shows that the reflection principle holds in the conditional case (\ref{leggemax+d}) and in the unconditional case (\ref{leggemax+d}) provided that $V(0) = c>0$.
\\

The situation is more complicated when $V(0) = -c$ and the explicit distributions are again obtained by induction by using result (\ref{leggemax+d}).
We show that
\begin{equation}\label{introlegge-p}
P\lbrace \max_{0\le s\le t} \mathcal{T}(s) \in d\beta\ |\ V(0) = -c, N(t) = 2k\rbrace = \frac{2(2k)!}{k!(k-1)!} \frac{(c^2t^2 - \beta^2)^{k-1}(ct-\beta)}{(2ct)^{2k}} d\beta
\end{equation}
for $0<\beta<ct$, $k\ge 1$ and
\begin{equation}\label{introsingolarita-p}
  P\lbrace \max_{0\le s\le t} \mathcal{T}(s) = 0\ |\ V(0) = -c, N(t) = 2k\rbrace = \binom{2k}{k}\frac{1}{2^{2k}}
\end{equation}
for $k\ge 0$.
\\
From (\ref{introlegge-p}) and (\ref{introsingolarita-p}) we derive the cumulative distribution function of the maximum as
$$P\lbrace \max_{0\le s\le t} \mathcal{T}(s) < \beta\ |\ V(0) = -c, N(t) = 2k\rbrace = $$
\begin{equation}\label{introrip-p}
 =\frac{\beta}{ct} \sum_{j=0}^{k-1} \Bigl( \frac{c^2t^2-\beta^2}{c^2t^2}\Bigr)^j \frac{1}{2^{2j}}\binom{2j}{j} +  \frac{(c^2t^2-\beta^2)^k }{(2ct)^{2k}}\binom{2k}{k}
\end{equation}
$$ = P\lbrace \max_{0\le s\le t} \mathcal{T}(s) < \beta\ |\ V(0) = c, N(t) = 2k-1\rbrace  +  \frac{(c^2t^2-\beta^2)^k }{(2ct)^{2k}}\binom{2k}{k}$$
\\
The last case examined shows that, for $0<\beta <ct$, the following relationships hold
$$P\lbrace \max_{0\le s\le t} \mathcal{T}(s) \in d\beta\ |\ V(0) = -c, N(t) = 2k+1\rbrace =$$
$$  =\Biggl( \frac{(2k+1)!}{(k-1)!(k+1)!} \frac{(c^2t^2 - \beta^2)^{k-1}(ct-\beta)}{(2ct)^{2k}} +\frac{(2k+1)!}{k!(k+1)!} \frac{(c^2t^2 - \beta^2)^{k}}{(2ct)^{2k+1}}  \Biggr)d\beta =  $$
$$=\frac{2k+1}{2k+2} P\lbrace \max_{0\le s\le t} \mathcal{T}(s) \in d\beta\ |\ V(0) = -c, N(t) = 2k\rbrace\ +$$
\begin{equation}\label{introsommapesata}
+\ \frac{1}{2k+2}P\lbrace \max_{0\le s\le t} \mathcal{T}(s) \in d\beta\ |\ V(0) = c, N(t) = 2k+1\rbrace
\end{equation}
\\
Thus the distribution is an average of distributions (\ref{introlegge-p}) and (\ref{leggemax+d}) with a prevailing weight of the first one.
\\
Finally
$$ P\lbrace \max_{0\le s\le t} \mathcal{T}(s) = 0\ |\ V(0) = -c, N(t) = 2k+1\rbrace = \binom{2k+1}{k}\frac{1}{2^{2k+1}} =$$
\begin{equation}\label{sing2}
=\frac{2k+1}{2k+2} P\lbrace \max_{0\le s\le t} \mathcal{T}(s) = 0\ |\ V(0) = -c, N(t) = 2k\rbrace
\end{equation}

\bigskip

\section{Preliminaries about the telegraph process}

The symmetric telegraph process has the form
\begin{equation}\label{ptdefint}
\mathcal{T}(t) = \int_0^t V(0)(-1)^{N(s)}ds 
\end{equation}
where $V(0)$ is a two valued symmetric random variable taking values $\pm c$ and $N(t)$ is a homogeneous Poisson process with rate $\lambda > 0$ and independent of $V(0)$. An alternative form of (\ref{ptdefint}) is

\begin{equation} \label{ptdef}
\mathcal{T}(t)=  V(0)\sum_{k=1}^{N(t)+1} (T_{k} - T_{k-1})(-1)^{k-1}
\end{equation}
with $0 = T_0 < T_1<...< T_{N(t)}<T_{N(t)+1} = t$. The random times $T_1,...T_{N(t)}$ are the arrival times of the Poisson process.
\\For $N(t) = n$, the instants $T_1, ...,T_n$ are uniformly distributed in the simplex and thus have density
\begin{equation}\label{unifsimplesso}
f(t_1, ..., t_n) = \frac{n!}{t^n}
\end{equation}
for $0<t_1<...<t_n<t$.
\\Because of the exchangeability of the random variables $T_k-T_{k-1}$ we can write the displacement (\ref{ptdef}), for $N(t) = n$, as
\begin{equation}\label{ptsviluppo}
\mathcal{T}_n(t) = V(0) \Bigl(T^+_n - T^-_n\Bigr)  =V(0) \Bigl( 2T_n^+ -t\Bigr)
\end{equation}
where $T^+_n$ is the time spent by the particle moving with the same direction of $V(0)$ and $T_n^-$ represents the time spent moving in the other direction. Clearly the total time is $t = T_n^+ + T_n^-$ and the variable $T^+_n$ is given by the sum of the $n^+$ displacements with speed equal to $V(0)$.
\\
The relationship (\ref{ptsviluppo}) permits us to write that
\begin{equation}\label{ptsviluppo2}
P\lbrace \mathcal{T}(t) < x | N(t) = n, V(0) = c\rbrace = \Bigg \{\begin{array}{l l} 
	\int_0^{\frac{x+ct}{2c}} f_{T^+_n}(z) dz & \ if\ V(0) = c \\
\\
	\int^t_{\frac{-x+ct}{2c}} f_{T^+_n}(z) dz & \ if\ V(0) =- c \\
			\end{array}
		\end{equation}
where  $f_{T^+_n}$ is the density of the time spent moving with speed $V(0)$. This density, as we show later, is equal to the density of the $n^+$-th order statistic of $n$ independent uniformly distributed, in $(0,t)$, random variables $Y_1, ... Y_n$ which reads
\begin{equation}\label{denstatord}
f_{Y_{(n^+)}}(z) = \frac{n}{t} \binom{n-1}{n^+-1}\Bigl(\frac{z}{t}\Bigr)^{n^+-1} \Bigl(1-\frac{z}{t} \Bigr)^{n-n^+}
\end{equation}
for $0<z<t$.
\\

In view of (\ref{ptsviluppo2}) and (\ref{denstatord}), by considering that $N(t) = 2k+1$, then we have that $n^+ = k+1$ and therefore
$$ P \lbrace \mathcal{T}(t) \in dx | N(t) = 2k+1, V(0) = c \rbrace =$$
\begin{equation}\label{pt+d} 
=\frac{d}{dx} \int_0^{\frac{x+ct}{2c}} \frac{2k+1}{t} \binom{2k}{k}\Bigl(\frac{z}{t}\Bigr)^k \Bigl(1-\frac{z}{t} \Bigr)^k dz = \frac{(2k+1)!}{k!^2} \frac{(c^2t^2-x^2)^k}{(2ct)^{2k+1}}dx
\end{equation}
for $|x|<ct$. 
\\Analogously we obtain that
\begin{equation}\label{pt-d}  P \lbrace \mathcal{T}(t) \in dx | N(t) = 2k+1, V(0) =- c \rbrace =\frac{(2k+1)!}{k!^2} \frac{(c^2t^2-x^2)^k}{(2ct)^{2k+1}}dx
\end{equation}
for $|x|<ct$. Distributions (\ref{pt+d}) and (\ref{pt-d}) are identical since in both cases there is the same number of rightward and leftward displacements. By means of them we also obtain that
\begin{equation}\label{ptd}  P \lbrace \mathcal{T}(t) \in dx | N(t) = 2k+1\rbrace =\frac{(2k+1)!}{k!^2} \frac{(c^2t^2-x^2)^k}{(2ct)^{2k+1}}dx
\end{equation}
for $|x|<ct$ and it coincides with formula (2.17) of De Gregorio et al. (2004).
\\

For an even number of changes of directions we have that (again $n^+ = k+1$), for $|x|<ct$
$$ P \lbrace \mathcal{T}(t) \in dx | N(t) = 2k, V(0) = c \rbrace =$$
\begin{equation}\label{pt+p} 
=\frac{d}{dx} \int_0^{\frac{x+ct}{2c}} \frac{2k}{t} \binom{2k-1}{k}\Bigl(\frac{z}{t}\Bigr)^k \Bigl(1-\frac{z}{t} \Bigr)^{k-1} dz = \frac{(2k)!}{k!(k-1)!} \frac{(ct+x)^k(ct-x)^{k-1}}{(2ct)^{2k}}dx
\end{equation}
and
\begin{equation}\label{pt-p}  P \lbrace \mathcal{T}(t) \in dx | N(t) = 2k, V(0) =- c \rbrace =\frac{(2k)!}{k!(k-1)!} \frac{(ct+x)^{k-1}(ct-x)^k}{(2ct)^{2k}}dx
\end{equation}
which lead to
\begin{equation}\label{ptp}  P \lbrace \mathcal{T}(t) \in dx | N(t) = 2k\rbrace =\frac{(2k)!}{k!(k-1)!} \frac{ct(c^2t^2-x^2)^{k-1}}{(2ct)^{2k}}dx
\end{equation}
that coincides with formula (2.18) of De Gregorio et al. (2004).
The reader can also notice that (\ref{ptp}), with $N(t) = 2k+2$, is equal to (\ref{ptd}).
\\

Finally, the unconditional probability law of the process is given by
\begin{equation}\label{ptsingolarita}
P\lbrace \mathcal{T}(t) = ct \rbrace = P\lbrace \mathcal{T}(t) = -ct \rbrace = \frac{e^{-\lambda t}}{2}
\end{equation}			
and
\begin{equation}\label{ptden}
P\lbrace \mathcal{T}(t) \in dx \rbrace = \frac{e^{- \lambda t}}{2c} \Biggl[ \lambda I_{0}\biggl(\frac{\lambda}{c}\sqrt{c^{2}t^{2}-x^{2}}\biggr) + \frac{\partial}{\partial t} I_{0} \biggl(\frac{\lambda}{c}\sqrt{c^{2}t^{2}-x^{2}}\biggr) \Biggr] dx
\end{equation}
for $|x|<ct$.
\\

Let $N(t) = n$. We now show that the increments $T_k-T_{k-1},\ 1\le k\le n$, appearing in (\ref{ptdef}) and composing $T_n^+$, have the same distribution of the increments of successive order statistics $Y_{(k)}-Y_{(k-1)}$ from uniformly distributed, in $(0,t)$, random variables $Y_1, ..., Y_n$.

First of all we recall that the two-fold joint distribution of order statistics $Y_{(k)}, Y_{(l)}, \ l>k$ from $n$ independent and identically distributed random variables, with cumulative distribution function $F$ and density $f$, is
$$P\lbrace Y_{(k)}\in dy, Y_{(l)}\in dz \rbrace = $$
\begin{equation}\label{congiuntastatord}
=\frac{n!}{(k-1)!(l-k-1)!(n-l-k)!}F^{k-1}(y)f(y)dy \Bigl(F(z)-F(y) \Bigr)^{l-k-1}f(z) dz \Bigl(1-F(z) \Bigr)^{n-l}
\end{equation}
for $0<y<z$.
\\

For uniformly distributed random variables in $(0,t)$, formula (\ref{congiuntastatord}) becomes
$$P\lbrace Y_{(k)}\in dy, Y_{(l)}\in dz \rbrace = $$
\begin{equation}
=\frac{n!}{(k-1)!(l-k-1)!(n-l-k)!}\Bigl(\frac{y}{t}\Bigr)^{k-1}\frac{dy}{t} \Bigl(\frac{z-y}{t} \Bigr)^{l-k-1}\frac{ dz}{t} \Bigl(\frac{t-z}{t} \Bigr)^{n-l}
\end{equation}
for $0<y<z<t$.
\\

The increment $Y_{(l)}-Y_{(k)}$ has distribution
\begin{equation}
P\lbrace Y_{(l)} - Y_{(k)}<w \rbrace = 1-\int_w^t \int_0^{z-w}P\lbrace Y_{(k)}\in dy, Y_{(l)}\in dz \rbrace
\end{equation}
and therefore, for $0<w<t$
$$P\lbrace Y_{(l)} - Y_{(k)}\in dw\rbrace = \frac{n!\ dw}{(k-1)!(l-k-1)!(n-l)!}\int_w^t (z-w)^{k-1}w^{l-k-1}(t-z)^{n-l}\frac{dz}{t^n} = $$
considering the change of variable $(z-w) = (t-w)y$
$$ = \frac{n!\ w^{l-k-1}\ dw}{(k-1)!(l-k-1)!(n-l)! t^n} \int_0^1 y^{k-1} (1-y)^{n-l}dz(t-w)^{n-l+k}  = $$
\begin{equation}\label{diffstatord}
= \frac{n! }{(l-k-1)!(n-l+k)!}\frac{w^{l-k-1}(t-w)^{n-l+k}}{t^n} dw
\end{equation}
and it only depends on the distance $l-k$. If $l = k+1$ (\ref{diffstatord}) reduces to 
\begin{equation}\label{diffstatord2}
P\lbrace Y_{(k+1)} - Y_{(k)}\in dw\rbrace = \frac{n}{t}\Bigl(1-\frac{w}{t}\Bigr)^{n-1} dw
\end{equation}
and is independent of $k$.

From the joint distribution (\ref{unifsimplesso}) of the instants of occurance $T_1, ..., T_n$ of a homogeneous Poisson process we extract the bivariate density
$$P\lbrace T_k \in dt_k, T_l \in dt_l |N(t) = n\rbrace = $$
$$ = \frac{n!}{t^n} \int_0^{t_k} dt_1 \cdot \cdot \cdot \int_{t_k-2}^{t_k} dt_{k-1} \cdot dt_k \cdot \int_{t_k}^{t_l} dt_{k+1} \cdot \cdot \cdot \int_{t_l-2}^{t_l} dt_{l-1} \cdot dt_l \cdot \int_{t_l}^t dt_{l+1} \cdot \cdot \cdot \int_{t_{n-1}}^t dt_n = $$
\begin{equation}\label{doppiapoisson}
= \frac{n!}{t^n}\frac{t_k^{k-1}}{(k-1)!}\frac{(t_l-t_k)^{l-k-1}}{(l-k-1)!}\frac{(t-t_l)^{n-l}}{(n-l)!}dt_k dt_l
\end{equation}
for $0<t_k<t_l<t$. 
\\This coincides with the joint distribution of the order statistics $Y_{(k)},\ Y_{(l)}
$ from a sample of $n$ independent uniformly distributed in $(0,t)$ random variables. Thus the distribution of $T_l-T_k$ coincides with that of $Y_{(l)} -Y_{(k)}$. Therefore, the one-step displacements $T_k-T_{k-1}$ have the same distribution of $Y_{(k)} -Y_{(k-1)}$.
Considering that the random time $T_n^+$ in (\ref{ptsviluppo}) is given by the sum of $n^+$ one-step displacements of the type $T_k-T_{k-1}$ and that we can express the $n^+$-th order statistics as
$$Y_{(n^+)} = Y_{(1)} + (Y_{(2)}-Y_{(1)}) + ... + (Y_{(n^+)}-Y_{(n^+-1)})$$
that it is a sum of $n^+$ increments of the type $Y_{(k)} -Y_{(k-1)}$, we have that $T_{n^+} \stackrel{d}{=} Y_{(n^+)}$.


\section{Maximum of the telegraph process with initial rightward velocity}

We begin by considering the case where $V(0) = c$ and the number of changes of direction is odd, that is $N(t) = 2k+1$, $k \in \mathbb{N}_0$. In this case the sample paths of the telegraph process
\begin{equation} \label{pt+dispari}
\mathcal{T}_{2k+1}(t) = cT_1-c(T_2-T_1)+...+c(T_{2k+1}-T_{2k})-c(t-T_{2k+1})
\end{equation}
consists of $k+1$ upward (or rightward) displacements and $k+1$ downward (or leftward) displacements. The relative maxima of (\ref{pt+dispari}) can be attained by its truncated displacements, truncation being performed at odd-order Poisson times.
\\The displacements being considered are therefore
$$\mathcal{T}(T_{2j+1}) =  cT_1-c(T_2-T_1)+...+c(T_{2j+1}-T_{2j})$$
for $1\le j\le k$, with $j+1$ rightward displacements and $j+1$ leftward ones.
\\

In this section we evaluate the following probability
$$P \lbrace \max_{0\le s\le t} \mathcal{T}(s) < \beta \ |\ N(t) = 2k+1, V(0) = c \rbrace  = $$
\begin{equation}\label{iniziomax+}
 = P\bigl\lbrace \cap_{j=0}^k \lbrace\mathcal{T}(T_{2j+1}) < \beta \rbrace \ |\  N(t) = 2k+1, V(0) = c\bigr\rbrace
\end{equation}
For $k = 0,1,2$ the distribution (\ref{iniziomax+}) was established by direct calculation. Therefore, for a rightward starting motion with one, three and five changes of direction  the distribution of the maxima showed a regularity leading to conjecture result (\ref{leggemax+d}).
In the next theorem we prove by induction that (\ref{leggemax+d}) holds.

\bigskip

\begin{theorem}\label{teoremamax+d}
Let $\lbrace \mathcal{T}(t) \rbrace_{t\ge0}$ a symmetric telegraph process, then
\begin{equation}\label{leggemax+d2}
P\lbrace \max_{0\le s\le t} \mathcal{T}(s) \in d\beta\ |\ V(0) = c, N(t) = 2k+1\rbrace \ =\ 2\frac{(2k+1)!}{k!^2} \frac{(c^2t^2 - \beta^2)^k}{(2ct)^{2k+1}}d\beta
\end{equation}
for $k\in \mathbb{N}_0,\ 0<\beta <ct$.
\end{theorem}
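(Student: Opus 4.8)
\emph{Sketch of proof.} The plan is to induct on $k$, conditioning on the first two Poisson times and using the Markov property together with the time–homogeneity of $\mathcal{T}$ with respect to the reversal instants. For $k=0$ we have $N(t)=1$: the path in (\ref{pt+dispari}) rises linearly to $\mathcal{T}(T_1)=cT_1$ and then decreases, so $\max_{0\le s\le t}\mathcal{T}(s)=cT_1$; since $T_1$ is uniform on $(0,t)$ the maximum is uniform on $(0,ct)$, which is the right–hand side of (\ref{leggemax+d2}) for $k=0$.

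Assume now that (\ref{leggemax+d2}) holds with $k$ replaced by $k-1$, equivalently that the cumulative form (\ref{ripmax+d}) holds at level $k-1$. Condition on $T_1=t_1$, $T_2=t_2$ with $0<t_1<t_2<t$. On $[0,t_1]$ the path rises to $ct_1$; on $[t_1,t_2]$ it falls to $\mathcal{T}(t_2)=c(2t_1-t_2)$; and on $[t_2,t]$, by markovianity and homogeneity (and since, given $N(t)=2k+1$ and $T_2=t_2$, the remaining reversal instants are $2k-1$ ordered uniform points on $(t_2,t)$), the process $\{\mathcal{T}(t_2+s)-\mathcal{T}(t_2)\}_{0\le s\le t-t_2}$ is an independent symmetric telegraph process with $V(0)=c$ and exactly $2k-1$ velocity reversals. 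Writing $M_{k-1}$ for its running maximum over $[0,t-t_2]$ and noting that the running maximum on $[0,t_2]$ equals $ct_1$ while $M_{k-1}\ge 0$, we obtain
\begin{equation}\label{pp:decomp}
\max_{0\le s\le t}\mathcal{T}(s)=\max\bigl(ct_1,\ c(2t_1-t_2)+M_{k-1}\bigr),
\end{equation}
so that $\{\max_{0\le s\le t}\mathcal{T}(s)<\beta\}$ holds iff $ct_1<\beta$ and $M_{k-1}<\beta-c(2t_1-t_2)$; the first condition forces the bound $\beta-c(2t_1-t_2)=(\beta-ct_1)+c(t_2-t_1)$ to be strictly positive.

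Taking expectations, $P\{\max_{0\le s\le t}\mathcal{T}(s)<\beta\mid N(t)=2k+1,V(0)=c\}$ equals the integral, over $0<t_1<t_2<t$, of $\mathbf{1}_{\{ct_1<\beta\}}\,G_{k-1}\bigl(\beta-c(2t_1-t_2);\,t-t_2\bigr)$ against the joint density of $(T_1,T_2)$, which by (\ref{doppiapoisson}) equals $\frac{(2k+1)!}{(2k-1)!}\,\frac{(t-t_2)^{2k-1}}{t^{2k+1}}$; here $G_{k-1}(\cdot;s)$ is the distribution function (\ref{ripmax+d}) at level $k-1$ and horizon $s$, extended by $G_{k-1}(\gamma;s)=1$ for $\gamma\ge cs$. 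I would then differentiate in $\beta$ — obtaining one term in which $G_{k-1}$ is replaced by the level-$(k-1)$ density (\ref{leggemax+d2}) and a boundary term from the upper limit $t_1=\beta/c$ — and evaluate the iterated integral by the substitutions $t_1\mapsto\beta-c(2t_1-t_2)$ and $t_2\mapsto t-t_2$, which turn both integrations into Beta integrals, finally collapsing the emerging finite binomial sum into $2\frac{(2k+1)!}{k!^2}\frac{(c^2t^2-\beta^2)^k}{(2ct)^{2k+1}}$.

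The main obstacle is this last computation. Because $G_{k-1}(\cdot;t-t_2)$ is piecewise — it equals $1$ once its argument reaches the cut-off $c(t-t_2)$ — the $(t_1,t_2)$–region splits into several pieces according to the sign of $t_2-\frac{ct-\beta}{2c}$ and the position of $\beta-c(2t_1-t_2)$ relative to $0$ and to $c(t-t_2)$, each of which must be integrated separately; the pieces must then recombine, via a binomial identity, into the single closed form claimed. A variant that shortens the algebra is to prove the cumulative statement (\ref{ripmax+d}) and the density (\ref{leggemax+d2}) simultaneously by the same induction, so that the inductive hypothesis is already available in the integration–friendly cumulative form and differentiation under the integral is postponed to the very end.
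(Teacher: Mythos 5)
Your proposal is correct and follows essentially the same route as the paper: it conditions on the first two Poisson times, uses markovianity together with the joint density (\ref{doppiapoisson}), and inducts on $k$ with the residual process carrying $2k-1$ reversals, so that your decomposition $\max\bigl(ct_1,\,c(2t_1-t_2)+M_{k-1}\bigr)$ and the ensuing region split are exactly the content of the paper's recurrence (\ref{ricorrenza+}), whose three terms correspond to your interior derivative term and the two pieces of your boundary term at $t_1=\beta/c$. The only difference is bookkeeping — you integrate the cumulative form and differentiate at the end, whereas the paper writes the density recurrence directly — and the computation you defer is carried out in full there.
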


\bigskip

\begin{proof}
We establish a general recurrence relationship between the distribution of 
\\$(\max_{0\le s\le t} \mathcal{T}(s) | N(t) = n, V(0) = c)$ and $(\max_{0\le s\le t} \mathcal{T}(s) | N(t) = n-2, V(0) = c)$, for $n\ge2$.
\\
We must consider three cases:

Case 1. where $cT_1 <\beta$ and the maximum is obtained during the residual interval of time $(T_1, t)$. Thus, we need that in $T_2$ there is still enough time to reach $\beta$, meaning that $c(t-T_2)\ge\beta-2cT_1 + cT_2$, where $2cT_1-cT_2$ represents the position at time $T_2$ ;

Case 2. where the maximum $\beta$ is reached at time $T_1 = \frac{\beta}{c}$ and at time $T_2$ we still have time to overpass $\beta$, thus $c(t-T_2)\ge\beta-2c\frac{\beta}{c} + cT_2$;

Case 3. where the maximum $\beta$ is reached at time $T_1$ and at time $T_2$ we moved too far leftward that we surely never arrive at $\beta$ in the residual time interval $(T_2, t)$, thus  $c(t-T_2)<-\beta + cT_2$.
\\

In light of these three cases, the recurrence relationship writes
\begin{equation}\label{ricorrenza+}
P\lbrace \max_{0\le s\le t} \mathcal{T}(s) \in d\beta\ |\ V(0) = c, N(t) = n\rbrace  = 
\end{equation}
$$ = \int_0^{\frac{\beta}{c}} \int_{t_1}^{\frac{ct-\beta}{2c}+t_1}P\lbrace \max_{0\le s\le t-t_2} \mathcal{T}(s) \in d\beta\ |\ V(0) = c, N(t-t_2) = n-2, \mathcal{T}(0) = 2ct_1-ct_2\rbrace \cdot$$ 
$$\hspace{4cm}\cdot P\lbrace T_1 \in dt_1, T_2 \in dt_2\ | \ N(t) = n\rbrace + $$
$$ + \int_{\frac{\beta}{c}}^{\frac{ct+\beta}{2c}} P\lbrace \max_{0\le s\le t-t_2} \mathcal{T}(s) < \beta\ |\ V(0) = c, N(t-t_2) = n-2, \mathcal{T}(0) = 2\beta-ct_2\rbrace \cdot$$
$$\cdot P\lbrace T_1 \in \frac{d\beta}{c}, T_2 \in dt_2\ | \ N(t) = n\rbrace\ +\ \int_{\frac{ct+\beta}{2c}}^{t} P\lbrace T_1 \in \frac{d\beta}{c}, T_2 \in dt_2\ | \ N(t) =n\rbrace $$
where each term refers to each of the above scenarios.
\\

To prove the theorem we consider relation (\ref{ricorrenza+}) with $n = 2k+1$, $k\in \mathbb{N}_0$.
\\
In view of (\ref{leggemax+d}) (now induction hypothesis) and (\ref{doppiapoisson}), the first addend of (\ref{ricorrenza+}) becomes
$$ \int_0^{\frac{\beta}{c}} \int_{t_1}^{\frac{ct-\beta}{2c}+t_1}P\lbrace \max_{0\le s\le t-t_2} \mathcal{T}(s) \in d\beta\ |\ V(0) = c, N(t-t_2) = 2k-1, \mathcal{T}(0) = 2ct_1-ct_2\rbrace \cdot$$ 
$$\hspace{4cm}\cdot P\lbrace T_1 \in dt_1, T_2 \in dt_2\ | \ N(t) = 2k+1\rbrace =$$
$$ = d\beta\int_0^{\frac{\beta}{c}} dt_1 \int_{t_1}^{\frac{ct-\beta}{2c}+t_1} dt_2 \frac{2(2k-1)!}{(k-1)!^2\bigl(2c(t-t_2)\bigr)^{2k-1}} \Bigl(c^2(t-t_2)^2 -(\beta-2ct_1+ct_2)^2\Bigr)^{k-1} \cdot$$
$$\hspace{6cm}\cdot \frac{(2k+1)!(t-t_2)^{2k-1}}{(2k-1)!\ t^{2k+1}} = $$ 
(by considering the change of variable $w = t_2 -t_1$ in the inner integral)
$$ = \frac{d\beta\ 2(2k+1)!}{(k-1)!^2(2ct)^{2k-1}t^2} \int_0^{\frac{\beta}{c}} dt_1 \int_0^{\frac{ct-\beta}{2c}}  \Bigl(c^2(t-t_1-w)^2 -(\beta-ct_1+cw)^2\Bigr)^{k-1}dw = $$
$$ =\frac{d\beta\ 2(2k+1)!\ c^{2k-2}}{(k-1)!^2(2ct)^{2k-1}t^2} \int_0^{\frac{\beta}{c}} dt_1 \int_0^{\frac{ct-\beta}{2c}}  \Bigl((t+\frac{\beta}{c}-2t_1)(t-\frac{\beta}{c})-2w(t+\frac{\beta}{c}-2t_1) \Bigr)^{k-1}dw =$$
$$ =\frac{ d\beta\ (2k+1)!\ c^{k-2}}{k!(k-1)!(2ct)^{2k-1}t^2}  \int_0^{\frac{\beta}{c}} \bigl(t+\frac{\beta}{c}-2t_1\bigr)^{k-1}dt_1 (ct-\beta)^k = $$
\begin{equation}\label{addendo1+}
= 2\frac{(2k+1)!}{k!^2} \frac{(c^2t^2 - \beta^2)^k}{(2ct)^{2k+1}}d\beta - 2\frac{(2k+1)!}{k!^2} \frac{(ct- \beta)^{2k}}{(2ct)^{2k+1}}d\beta
\end{equation}
\\

The second term of (\ref{ricorrenza+}) is (remember that $n = 2k+1$)
$$\int_{\frac{\beta}{c}}^{\frac{ct+\beta}{2c}} P\lbrace \max_{0\le s\le t-t_2} \mathcal{T}(s) < -\beta +ct_2\ |\ V(0) = c, N(t-t_2) = 2k-1\rbrace \cdot$$
$$ \hspace{4cm} \cdot  P\lbrace T_1 \in \frac{d\beta}{c}, T_2 \in dt_2\ | \ N(t) = 2k+1\rbrace = $$
$$ = \int_{\frac{\beta}{c}}^{\frac{ct+\beta}{2c}}dt_2 \int_0^{ct_2-\beta} dz \frac{2(2k-1)!}{(k-1)!^2\bigl(2c(t-t_2)\bigr)^{2k-1}} \Bigl(c^2(t-t_2)^2 -z^2\Bigr)^{k-1} \cdot$$
$$\hspace{6cm}\cdot \frac{(2k+1)!(t-t_2)^{2k-1}}{(2k-1)!\ t^{2k+1}}\frac{d\beta}{c} = $$
$$ =  \frac{d\beta \ 2(2k+1)!}{(k-1)!^2c^{2k}t^{2k+1}2^{2k-1}} \int_0^{\frac{ct-\beta}{2c}} dz \int_{\frac{z+\beta}{c}}^{\frac{ct+\beta}{2c}} \Bigl(c^2(t-t_2)^2 -z^2\Bigr)^{k-1} dt_2 = $$
(by considering the change of variable $t_2 - \frac{z+\beta}{c} = w$ in the inner integral)
$$ =  \frac{d\beta \ 2(2k+1)!}{(k-1)!^2c^{2k}t^{2k+1}2^{2k-1}} \int_0^{\frac{ct-\beta}{2c}} dz \int_0^{\frac{ct-\beta}{2c}-\frac{z}{c}} \Bigl((ct-w-\beta)^2 -2z\bigl(c(t-w)-\beta\bigr)\Bigr)^{k-1} dw = $$
$$ =  \frac{d\beta \ 2(2k+1)!}{(k-1)!^2c^{2k}t^{2k+1}2^{2k-1}}  \int_0^{\frac{ct-\beta}{2c}} dw \Bigl( c(t-w)-\beta\Bigr)^{k-1} \int_0^{\frac{ct-\beta}{c}-cw} \Bigl( c(t-w)-\beta-z\Bigr)^{k-1}dz = $$
\begin{equation}\label{addendo2intermedio}
 =\frac{d\beta \ (2k+1)!}{k!(k-1)!c^{2k}t^{2k+1}2^{2k-1}} \Bigl[ \int_0^{\frac{ct-\beta}{2c}} \Bigl( ct-\beta-cw\Bigr)^{2k-1} dw - \int_0^{\frac{ct-\beta}{2c}} (cu)^{k}\Bigl( ct-\beta-cw\Bigr)^{k-1} dw \Bigr]
\end{equation}
The second integral in (\ref{addendo2intermedio}) can be reduced to a Beta integral
$$\int_0^{\frac{ct-\beta}{2c}} (cu)^{k}\Bigl( ct-\beta-cw\Bigr)^{k-1} dw = \hspace{2cm}\Bigl( cw = (ct-\beta)z\Bigr) = $$
\begin{equation}\label{secondointegrale} 
=\frac{(ct-\beta)^{2k}}{c} \int_0^{\frac{1}{2}}z^k(1-z)^{k-1}dz =  \frac{(ct-\beta)^{2k}}{c}\Bigl[ \frac{(k-1)!^2}{2^2(2k-1)} -\frac{1}{k2^{2k+1}}\Bigr]
\end{equation}
where the last equality can be obtained by developing the integral by parts.
\\
\\
In view of (\ref{secondointegrale}), (\ref{addendo2intermedio}) becomes
$$ \frac{d\beta \ 2(2k+1)!}{(k-1)!^2c^{2k}t^{2k+1}2^{2k-1}} \Bigl[\frac{(ct-\beta)^{2k}}{2ck}-\frac{(ct-\beta)^{2k}}{2^{2h}2ck} -\frac{(ct-\beta)^{2k}}{c}\Bigl( \frac{(k-1)!^2}{2^2(2k-1)} -\frac{1}{k2^{2k+1}}\Bigr) \Bigr] = $$
\begin{equation}\label{addendo2+}= 2\frac{(2k+1)!}{k!^2} \frac{(ct- \beta)^{2k}}{(2ct)^{2k+1}}d\beta - 2(2k+1)!\frac{(ct- \beta)^{2k}}{(2ct)^{2k+1}}d\beta
\end{equation}
\\

The third term of (\ref{ricorrenza+}) yields
$$\int_{\frac{ct+\beta}{2c}}^{t} P\lbrace T_1 \in \frac{d\beta}{c}, T_2 \in dt_2\ | \ N(t) =2k+1\rbrace = $$
\begin{equation}\label{addendo3+}
=\frac{d\beta}{c}  \int_{\frac{ct+\beta}{2c}}^{t} \frac{(2k+1)!(t-t_2)^{2k-1}}{(2k-1)!\ t^{2k+1}}dt_2=(2k+1)!\frac{(ct- \beta)^{2k}}{2^{2k}(ct)^{2k+1}}d\beta
\end{equation}
\\

By summing up (\ref{addendo1+}), (\ref{addendo2+}) and (\ref{addendo3+}) we obtain that (\ref{ricorrenza+}) corresponds to thesis (\ref{leggemax+d2}) and this concludes the proof.
\end{proof}

\bigskip

\begin{remark}
The result of Theorem (\ref{teoremamax+d}) shows that a reflection principle is valid for the telegraph process with $V(0) = c$. At the instant $T_1$ of first change of direction it is possible to construct a sample path "reflected" around the level $\mathcal{T}(T_1)$. If the original trajectory overshot level $\beta$ and at time $t$ was below $\beta$, then the new one obtained by glueing together the original step with the reflected displacement will be at time $t$ over $\beta$. Thus,

$$P \lbrace \max_{0\le s\le t} \mathcal{T}(s)  > \beta , \mathcal{T}(t) < \beta \ |\ N(t)  = 2k+1, V(0) = c\rbrace  =  $$
$$ = P \lbrace \max_{0\le s\le t} \mathcal{T}(s)  > \beta , \mathcal{T}(t) > \beta \ |\ N(t)  = 2k+1, V(0) = c\rbrace  = $$ 
$$ = P \lbrace \mathcal{T}(t) > \beta \ |\ N(t)  = 2k+1, V(0) = c\rbrace $$ \hfill $\Diamond$
\end{remark}

\bigskip

\begin{theorem}\label{teoremamax+p}
Let $\lbrace \mathcal{T}(t) \rbrace_{t\ge0}$ a symmetric telegraph process, then
$$P\lbrace \max_{0\le s\le t} \mathcal{T}(s) \in d\beta\ |\ V(0) = c, N(t) = 2k+2\rbrace =$$
\begin{equation}\label{leggemax+p2}
 = P\lbrace \max_{0\le s\le t} \mathcal{T}(s) \in d\beta\ |\ V(0) = c, N(t) = 2k+1\rbrace \ = \ 2\frac{(2k+1)!}{k!^2} \frac{(c^2t^2 - \beta^2)^k}{(2ct)^{2k+1}}d\beta
\end{equation}
for $k\in \mathbb{N},\ 0<\beta <ct$ and
\begin{equation}\label{max+0}
P\lbrace \max_{0\le s\le t} \mathcal{T}(s) =ct\ |\ V(0) = c, N(t) = 0\rbrace = 1
\end{equation}
\end{theorem}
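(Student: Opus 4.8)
The plan is to dispose of the second statement at once and to obtain the first by induction on $k$ from the general recurrence (\ref{ricorrenza+}). Assertion (\ref{max+0}) is immediate: if $V(0)=c$ and $N(t)=0$ there are no velocity reversals, so $\mathcal{T}(s)=cs$ on $[0,t]$, a strictly increasing function, whence $\max_{0\le s\le t}\mathcal{T}(s)=\mathcal{T}(t)=ct$ with probability one.

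For (\ref{leggemax+p2}) I would run the recurrence (\ref{ricorrenza+}), which was established for every $n\ge2$ irrespective of parity, with $n=2k+2$. Removing the first two Poisson instants $T_1,T_2$ restores the velocity $+c$ at time $T_2$, so the inner process lives on $[0,t-t_2]$, starts rightward and carries $N(t-t_2)=2k$ reversals; thus the recurrence reduces the even family to itself with two fewer changes of direction, the shifted starting position ($2ct_1-ct_2$ in Case 1, $2\beta-ct_2$ in Cases 2 and 3) being absorbed by spatial homogeneity exactly as in the proof of Theorem \ref{teoremamax+d}. The base case $N(t)=2$ is settled by a direct computation of the kind already used for the small-$k$ odd distributions: $(T_1,T_2)$ is uniform on the simplex with density $2/t^2$, one has $\max_{0\le s\le t}\mathcal{T}(s)=\max\{cT_1,\ 2cT_1-2cT_2+ct\}$, and integrating over the corresponding planar region yields the uniform law on $(0,ct)$, i.e.\ (\ref{leggemax+p2}) with $k=0$; alternatively it drops out of (\ref{ricorrenza+}) applied with the deterministic $N(t-t_2)=0$ inner process furnished by (\ref{max+0}).

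For the inductive step, assuming (\ref{leggemax+p2}) for $2k$ reversals, the inner maximum has --- by the induction hypothesis --- the density $\frac{2(2k-1)!}{(k-1)!^2}\frac{(c^2s^2-z^2)^{k-1}}{(2cs)^{2k-1}}$ and the cumulative function (\ref{ripmax+d}) of the odd case with $2k-1$ reversals. Substituting this, together with the bivariate Poisson density (\ref{doppiapoisson}) for $n=2k+2$, namely $P\{T_1\in dt_1,T_2\in dt_2\mid N(t)=2k+2\}=(2k+2)(2k+1)\,\frac{(t-t_2)^{2k}}{t^{2k+2}}\,dt_1dt_2$, into the three addenda of (\ref{ricorrenza+}) and performing the same changes of variable as in Theorem \ref{teoremamax+d} ($w=t_2-t_1$ in the first addendum, $w=t_2-\frac{z+\beta}{c}$ in the second), one recovers the shape of (\ref{addendo1+}), (\ref{addendo2+}) and (\ref{addendo3+}), only with the exponent of $(t-t_2)$ raised by one and the combinatorial prefactors adjusted accordingly, so that an extra factor $(t-t_2)$ has to be carried through each integral; the middle addendum again requires reducing an incomplete Beta integral of the type (\ref{secondointegrale}) by repeated integration by parts.

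The only real difficulty is the bookkeeping in this last step: each of the three addenda carries, besides the desired $(c^2t^2-\beta^2)^k$ term, a spurious contribution proportional to $(ct-\beta)^{2k}$, and one must verify that these cancel in the sum, leaving exactly $2\frac{(2k+1)!}{k!^2}\frac{(c^2t^2-\beta^2)^k}{(2ct)^{2k+1}}d\beta$. Since this value is insensitive to the parity of $n$, it coincides with the right-hand side of (\ref{leggemax+d2}); obtaining it therefore simultaneously proves the stated value in (\ref{leggemax+p2}) and the equality of the even and odd conditional laws, and the induction closes.
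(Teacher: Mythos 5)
Your proposal follows the paper's proof essentially verbatim: the same recurrence (\ref{ricorrenza+}) with $n=2k+2$, the same induction within the even family (the inner process carrying $N(t-t_2)=2k$ reversals and the density supplied by the induction hypothesis), the base case $N(t)=2$ handled by direct calculation, and the same cancellation of boundary contributions among the three addenda. The only slips are cosmetic: in this even case the spurious terms are proportional to $(ct-\beta)^{2k+1}$ rather than $(ct-\beta)^{2k}$, and the middle addendum reduces to a complete Beta integral evaluated via the duplication formula of the Gamma function rather than an incomplete one treated by parts.
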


\bigskip

\begin{proof}
Also in this case we use the recurrence relationship (\ref{ricorrenza+}), but by considering $n = 2k+2$, $k\in \mathbb{N}$, which reads
\begin{equation}\label{ricorrenza+2}
P\lbrace \max_{0\le s\le t} \mathcal{T}(s) \in d\beta\ |\ V(0) = c, N(t) = 2k+2\rbrace  = 
\end{equation}
$$ = \int_0^{\frac{\beta}{c}} \int_{t_1}^{\frac{ct-\beta}{2c}+t_1}P\lbrace \max_{0\le s\le t-t_2} \mathcal{T}(s) \in d\beta\ |\ V(0) = c, N(t-t_2) = 2k, \mathcal{T}(0) = 2ct_1-ct_2\rbrace \cdot$$ 
$$\hspace{4cm}\cdot P\lbrace T_1 \in dt_1, T_2 \in dt_2\ | \ N(t) = 2k+2\rbrace + $$
$$ + \int_{\frac{\beta}{c}}^{\frac{ct+\beta}{2c}} P\lbrace \max_{0\le s\le t-t_2} \mathcal{T}(s) < \beta\ |\ V(0) = c, N(t-t_2) = 2k, \mathcal{T}(0) = 2\beta-ct_2\rbrace \cdot$$
$$\cdot P\lbrace T_1 \in \frac{d\beta}{c}, T_2 \in dt_2\ | \ N(t) = 2k+2\rbrace\ +\ \int_{\frac{ct+\beta}{2c}}^{t} P\lbrace T_1 \in \frac{d\beta}{c}, T_2 \in dt_2\ | \ N(t) =2k+2\rbrace $$

To prove the theorem we conjecture that
$$P\lbrace \max_{0\le s\le t} \mathcal{T}(s) \in d\beta\ |\ V(0) = c, N(t) = 2k+2\rbrace =$$
\begin{equation}
 = P\lbrace \max_{0\le s\le t} \mathcal{T}(s) \in d\beta\ |\ V(0) = c, N(t) = 2k+1\rbrace = 2\frac{(2k+1)!}{k!^2} \frac{(c^2t^2 - \beta^2)^k}{(2ct)^{2k+1}}d\beta
\end{equation}
For $k = 0,1$, that is for two and four changes of direction, this was proved by direct calculation in Orsingher (1990).
\\
Now, the first term of (\ref{ricorrenza+2}) can be written as
$$d\beta\int_0^{\frac{\beta}{c}} dt_1 \int_{t_1}^{\frac{ct-\beta}{2c}+t_1} dt_2 \frac{2(2k-1)!}{(k-1)!^2\bigl(2c(t-t_2)\bigr)^{2k-1}} \Bigl(c^2(t-t_2)^2 -(\beta-2ct_1+ct_2)^2\Bigr)^{k-1} \cdot$$
$$\hspace{6cm}\cdot \frac{(2k+2)!(t-t_2)^{2k}}{(2k)!\ t^{2k+2}} = $$ 
\begin{equation}\label{primoaddendointermedio}
= \frac{d\beta\ (2k+2)!\ c^{2k-2}}{k!(k-1)!(2ct)^{2k-1}t^3} \int_0^{\frac{\beta}{c}} dt_1 \int_{t_1}^{\frac{ct-\beta}{2c}+t_1}  \Bigl((t-t_2)^2 -(\frac{\beta}{c}-2t_1+t_2)^2\Bigr)^{k-1}(t-t_2)dt_2
\end{equation}
The inner integral in (\ref{primoaddendointermedio}) can be evaluated by using the change of variable $t_2-t_1 = w$ which leads to
$$\int_0^{\frac{ct-\beta}{2c}} \Bigl((t-t_1-w)^2 -(\frac{\beta}{c}-t_1+w)^2 \Bigr)^{k-1} (t-t_1-w) dw = $$
$$ = \int_0^{\frac{ct-\beta}{2c}} \Bigl((t-t_1)^2 -(\frac{\beta}{c}-t)^2-2w(t-2t_1+\frac{\beta}{c}) \Bigr)^{k-1} (t-t_1-w) dw = $$
$$=\bigl(t-2t_1+\frac{\beta}{c}\bigr)^{k-1}\int_0^{\frac{ct-\beta}{2c}} \bigl(t- \frac{\beta}{c}-2w  \bigr)^{k-1}  (t-t_1-w)  dw =$$
\begin{equation}\label{integraleinternop+1}
=\bigl(t-2t_1+\frac{\beta}{c}\bigr)^{k-1}  \Bigl(\frac{(t-t_1)}{2k}\bigl(t-\frac{\beta}{c}\bigr)^k - \frac{1}{2^2k(k+1)}\bigl(t-\frac{\beta}{c}\bigr)^{k+1}\Bigr)
\end{equation}
In the last step above, an integration by parts was performed.
\\By inserting result (\ref{integraleinternop+1}) into (\ref{primoaddendointermedio}) we have that the first term of (\ref{ricorrenza+2}) takes the following form
\begin{equation}\label{intermedio}
 \frac{d\beta\ (2k+2)!}{k!(k-1)!c2^{2k-1}t^{2k+2}} \int_0^\frac{\beta}{c} \bigl(t-2t_1+\frac{\beta}{c}\bigr)^{k-1}  \Bigl(\frac{(t-t_1)}{2k}\bigl(t-\frac{\beta}{c}\bigr)^k - \frac{1}{2^2k(k+1)}\bigl(t-\frac{\beta}{c}\bigr)^{k+1}\Bigr) dt_1 
\end{equation}
We now plug the following results into (\ref{intermedio})
$$\int_0^\frac{\beta}{c} \bigl(t+\frac{\beta}{c}-2t_1\bigr)^{k-1} (t-t_1) dt_1 = $$
\begin{equation}\label{aiuto+p1}
 = -\frac{1}{2k}\bigl(t-\frac{\beta}{c}\bigr)^{k+1}+\frac{t}{2k}\bigl(t+\frac{\beta}{c}\bigr)^{k}+\frac{1}{2^2k(k+1)}\bigl(t-\frac{\beta}{c}\bigr)^{k+1}-\frac{1}{2^2k(k+1)}\bigl(t+\frac{\beta}{c}\bigr)^{k+1}
\end{equation}
and
\begin{equation}\label{aiuto+p12}
\int_0^\frac{\beta}{c} \bigl(t+\frac{\beta}{c}-2t_1\bigr)^{k-1} dt_1 = \frac{1}{2k}\bigl(t+\frac{\beta}{c}\bigr)^{k} -\frac{1}{2k}\bigl(t-\frac{\beta}{c}\bigr)^{k}
\end{equation}
so that the integral (\ref{intermedio}), i.e. the first term of (\ref{ricorrenza+2}), becomes (we maintain the same order of terms in order to permit to the reader to reconstruct all details in the calculation)
$$ \frac{d\beta\ (2k+2)!}{k!(k-1)!2^{2k-1}t^{2k+2}c} \Biggl[  -\frac{1}{2^2k^2}\bigl(t-\frac{\beta}{c}\bigr)^{2k+1}+\frac{t}{2^2k^2}\bigl(t^2-\frac{\beta^2}{c^2}\bigr)^{k}+$$
$$ +\ \frac{1}{2^3k^2(k+1)}\bigl(t-\frac{\beta}{c}\bigr)^{2k+1} \ -\ \frac{1}{2^3k^2(k+1)}\bigl(t^2-\frac{\beta^2}{c^2}\bigr)^{k}\bigl(t+\frac{\beta}{c}\bigr)  +$$
$$ -\ \frac{1}{2^3k^2(k+1)}\bigl(t^2-\frac{\beta^2}{c^2}\bigr)^{k}\bigl(t-\frac{\beta}{c}\bigr)\ +\ \frac{1}{2^3k^2(k+1)}\bigl(t-\frac{\beta}{c}\bigr)^{2k+1}\Biggr] = $$
$$ =  \frac{d\beta\ (2k+2)!}{k!(k-1)!2^{2k-1}t^{2k+2}c} \Biggl[  \frac{1}{2^2k^2}\bigl(t-\frac{\beta}{c}\bigr)^{2k+1}\bigl(-1+\frac{2}{2(k+1)}\bigr) \ +  $$
$$+\ \frac{1}{2^2k^2}\bigl(t^2-\frac{\beta^2}{c^2}\bigr)^{k}\biggl(t- \frac{1}{2(k+1)} \bigl(t+\frac{\beta}{c}+t-\frac{\beta}{c}\bigr)\biggr) = $$
$$ = \frac{d\beta\ (2k+2)!}{k!(k-1)!2^{2k-1}t^{2k+2}c} \Biggl[ -\frac{1}{2^2k(k+1)}\bigl(t-\frac{\beta}{c}\bigr)^{2k+1}+   \frac{t}{2^2k(k+1)}\bigl(t^2-\frac{\beta^2}{c^2}\bigr)^{k} \Biggr]= $$
\begin{equation}\label{addendo+p1fine}
=2\frac{(2k+1)!}{k!^2} \frac{(c^2t^2 - \beta^2)^k}{(2ct)^{2k+1}}d\beta - 2^2\frac{(2k+1)!}{k!^2} \frac{(ct- \beta)^{2k+1}}{(2ct)^{2k+2}}d\beta
\end{equation}

The second integral of (\ref{ricorrenza+2}) can be developed in the following manner
$$ \int_{\frac{\beta}{c}}^{\frac{ct+\beta}{2c}} \int_0^{ct_2-\beta}  P\lbrace \max_{0\le s\le t-t_2} \mathcal{T}(s) \in dz\ |\ V(0) = c, N(t-t_2) = 2k\rbrace \cdot$$ 
$$\hspace{9cm}\cdot P\lbrace T_1 \in \frac{d\beta}{c}, T_2 \in dt_2\ | \ N(t) = 2k+2\rbrace = $$
$$ =  \int_{\frac{\beta}{c}}^{\frac{ct+\beta}{2c}} dt_2 \int_0^{ct_2-\beta} dz \bigl(c^2(t-t_2)^2- z^2\bigr)^{k-1} \frac{2(2k-1)!}{(k-1)!^2(2c)^{2k-1}(t-t_2)^{2k-1}} \cdot$$
$$\hspace{10cm}\cdot\frac{(t-t_2)^{2k}(2k+2)!}{t^{2k+2}(2k)!}  \frac{d\beta}{c} =   $$
$$ = \frac{d\beta\ (2k+2)!}{k!(k-1)!2^{2k-1}c^{2k}t^{2k+2}} \int_0^{\frac{ct-\beta}{2}} dz \int^{\frac{ct+\beta}{2c}}_{\frac{z+\beta}{c}}  \bigl(c^2(t-t_2)^2 -z^2\bigr)^{k-1}(t-t_2) dt_2= $$
$$ =  \frac{d\beta\ (2k+2)!}{k!^22^{2k}(ct)^{2k+2}} \int_0^{\frac{ct-\beta}{2}} \Biggl[\Bigl( (ct-\beta)^2 -2z(ct-\beta)\Bigr)^k- \Bigl(\bigl(\frac{ct-\beta}{2} \bigr)^2 - z^2\Bigr)^k  \Biggr] dz = $$
$$ = \frac{(2k+2)!}{k!^22^{2k}(ct)^{2k+2}}  \Biggl[\frac{(ct-\beta)^{2k+1}}{2(k+1)}-\frac{(ct-\beta)^{2k+1}}{2^{2k+2}} \int_0^1(1-w)^kw^{-\frac{1}{2}}dw \Biggr] d\beta  =$$
$$ =  \frac{(2k+2)!(ct-\beta)^{2k+1}}{k!^22^{2k}(ct)^{2k+2}} \Biggl[\frac{1}{2(k+1)} -\frac{1}{2^{2k+2}} \frac{\Gamma(k+1)\Gamma(\frac{1}{2})}{\Gamma (k+1+\frac{1}{2})} \Biggr] d\beta =$$
\begin{equation}\label{addendo+p2}
=\frac{2^2(2k+2)!(ct-\beta)^{2k+1}}{k!^2(2ct)^{2k+2}} d\beta-\frac{2^2(k+1)(ct-\beta)^{2k+1}}{(2ct)^{2k+2}}d\beta
\end{equation}
where in the last equation we applied the duplication formula of the gamma function, that is
\begin{equation}\label{duplicazionegamma}
\Gamma(2n) = \frac{\Gamma(n)\Gamma(n+\frac{1}{2}) 2^{2n-1}}{\sqrt\pi}
\end{equation}
A crucial point in the above calculation is the inversion of the order of integration which substantially simplifies the derivation of (\ref{addendo+p2}).
\\

The last term of (\ref{ricorrenza+2}) is straightforward and yields
\begin{equation}\label{addendo+p3}
\frac{d\beta}{c}  \int_{\frac{ct+\beta}{2c}}^{t} \frac{(2k+2)!(t-t_2)^{2k}}{(2k)!\ t^{2k+2}}dt_2= \frac{2^2(k+1)(ct-\beta)^{2k+1}}{(2ct)^{2k+2}}d\beta
\end{equation}
 
By summing up (\ref{addendo+p1fine}), (\ref{addendo+p2}) and (\ref{addendo+p3}) we obtain the claimed result (\ref{leggemax+p2}). 
\end{proof}

\bigskip

\begin{remark}
As a consequence of Theorem \ref{teoremamax+p} we have that, for $0<\beta <ct$
$$P \lbrace \max_{0\le s\le t} \mathcal{T}(s)  \in d \beta , N(t)\ even |\ V(0) = c\rbrace  = $$
$$ = d\beta\sum_{k=0}^\infty   2\frac{(2k+1)!}{k!^2} \frac{(c^2t^2 - \beta^2)^k}{(2ct)^{2k+1}} e^{-\lambda t}\frac{(\lambda t)^{2k+2}}{(2k+2)!} = $$
$$ = d\beta\frac{e^{-\lambda t} \lambda t}{\sqrt{c^2t^2 -\beta^2}} \sum_{k=0}^\infty \Bigl(\frac{\sqrt{c^2t^2 -\beta^2}}{2c}\Bigr)^{2k+1} \frac{1}{k!(k+1)!} = $$
\begin{equation}\label{leggecongiuntapari}
 = d\beta\frac{e^{-\lambda t} \lambda t}{\sqrt{c^2t^2 -\beta^2}} I_1\Bigl( \frac{\lambda }{c} \sqrt{c^2t^2 -\beta^2} \Bigr) 	\ =\ \frac{e^{-\lambda t}}{c}  \frac{\partial }{\partial t}  I_0\Bigl( \frac{\lambda }{c} \sqrt{c^2t^2 -\beta^2} \Bigr)d\beta
\end{equation} 
\\

From Theorem \ref{teoremamax+d} we have that, for $0<\beta <ct$
$$   P \lbrace \max_{0\le s\le t} \mathcal{T}(s)  \in d \beta, N(t)\ odd |\ V(0) = c\rbrace  =  d\beta\sum_{k=0}^\infty   2\frac{(2k+1)!}{k!^2} \frac{(c^2t^2 - \beta^2)^k}{(2ct)^{2k+1}} e^{-\lambda t}\frac{(\lambda t)^{2k+1}}{(2k+1)!} =$$
\begin{equation}\label{leggecongiuntadispari}
 = \frac{\lambda e^{-\lambda t}}{c}   I_0\Bigl( \frac{\lambda }{c} \sqrt{c^2t^2 -\beta^2} \Bigr)d\beta
\end{equation} 
\\

From (\ref{leggecongiuntapari}) and (\ref{leggecongiuntadispari}) we conclude that, for $0<\beta<ct$
$$P \lbrace \max_{0\le s\le t} \mathcal{T}(s)  \in d \beta \ |\ V(0) = c\rbrace  =  $$
\begin{equation}\label{legge+}
 = \frac{e^{-\lambda t}}{c} \Biggl[\lambda I_0\Bigl( \frac{\lambda }{c} \sqrt{c^2t^2 -\beta^2} \Bigr) + \frac{\partial }{\partial t}  I_0\Bigl( \frac{\lambda }{c} \sqrt{c^2t^2 -\beta^2} \Bigr) \Biggr]d\beta = 2P \lbrace  \mathcal{T}(t)  \in d \beta\rbrace 
\end{equation} 
\\
Thus the distribution of the maximum of the symmetric telegraph process coincides with the folded distribution of the telegraph process because a reflection principle holds in the case of the initially rightward oriented motion. Clearly, at $\beta = ct$ we have a singular component of the distribution because the absence of Poisson events brings deterministically the particle to $\beta = ct$ with probability $e^{-\lambda t}$.\hfill $\Diamond$
\end{remark}

\bigskip

\begin{remark}
It is well known that under Kac's condition, i.e. $\lambda, c\longrightarrow \infty, \ \frac{\lambda}{c^2} \longrightarrow 1$, the telegraph process $\mathcal{T}(t)$ converges to Brownian motion. Therefore, in view of (\ref{legge+}) and performing calculations similar to those displayed in Orsingher (1990), we have that the maximum of telegraph process with positive initial velocity converges to the maximum of Brownian motion.
\hfill $\Diamond$
\end{remark}

\bigskip

\section{Maximum of the initially negatively oriented telegraph process}

The most important qualitative difference between the case $V(0) = c$, treated in section 3, and $V(0) = -c$ is that the starting point can be the maximum of the process with positive probability. We must distinguish the following two cases, for $0<\beta < ct$, $k \in \mathbb{N}_0$
\begin{equation}\label{}
P\lbrace \max_{0\le s\le t} \mathcal{T}(s) \in d\beta\ |\ V(0) = -c, N(t) = 2k\rbrace 
\end{equation}
\begin{equation}\label{}
P\lbrace \max_{0\le s\le t} \mathcal{T}(s) \in d\beta\ |\ V(0) = -c, N(t) = 2k+1\rbrace 
\end{equation}

Our first result concerns the distribution conditioned on an even number of Poisson events.

\bigskip

\begin{theorem}\label{teorema-p}
Let $\lbrace \mathcal{T}(t) \rbrace_{t\ge0}$ be a symmetric telegraph process, then
\begin{equation}\label{legge-p}
P\lbrace \max_{0\le s\le t} \mathcal{T}(s) \in d\beta\ |\ V(0) = -c, N(t) = 2k\rbrace \ =\ \frac{2(2k)!}{k!(k-1)!} \frac{(c^2t^2 - \beta^2)^{k-1}(ct-\beta)}{(2ct)^{2k}} d\beta
\end{equation}
for $0<\beta<ct$, $k\ge 1$ and
\begin{equation}\label{singolarita-p}
  P\lbrace \max_{0\le s\le t} \mathcal{T}(s) = 0\ |\ V(0) = -c, N(t) = 2k\rbrace = \binom{2k}{k}\frac{1}{2^{2k}}
\end{equation}
for $k\ge 0$.
\end{theorem}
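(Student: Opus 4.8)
The plan is to condition on the first velocity reversal $T_1$ and reduce the problem to the already-established law of the maximum with initial rightward velocity (Theorem \ref{teoremamax+d}). Since $V(0)=-c$, the path decreases on $[0,T_1)$ from $0$ to $-cT_1$, so its running maximum there equals $0$ (attained at $s=0$); at $T_1$ the velocity becomes $+c$, and by markovianity and homogeneity the shifted process $\{\mathcal T(T_1+u)-\mathcal T(T_1)\}_{0\le u\le t-T_1}$ is a telegraph process with $V(0)=c$ which, given $N(t)=2k$, has $2k-1$ residual reversals on $[0,t-T_1]$ placed as uniform order statistics on $(t_1,t)$. Hence
\[
\max_{0\le s\le t}\mathcal T(s)=\max\bigl(0,\,-cT_1+\widetilde M\bigr),
\]
where, conditionally on $T_1=t_1$, the variable $\widetilde M$ is distributed as $\bigl(\max_{0\le s\le t-t_1}\mathcal T(s)\mid V(0)=c,\ N(t-t_1)=2k-1\bigr)$, whose density is given by \eqref{leggemax+d2} with $k$ replaced by $k-1$ and $t$ by $t-t_1$. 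From \eqref{unifsimplesso} (equivalently, marginalizing \eqref{doppiapoisson}) the conditional density of $T_1$ given $N(t)=2k$ is $2k(t-t_1)^{2k-1}/t^{2k}$ on $(0,t)$.

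For $0<\beta<ct$ I would then write the density of the maximum as
\[
\int_0^{\frac{ct-\beta}{2c}}\frac{2k(t-t_1)^{2k-1}}{t^{2k}}\cdot\frac{2(2k-1)!}{(k-1)!^2}\,\frac{\bigl(c^2(t-t_1)^2-(\beta+ct_1)^2\bigr)^{k-1}}{\bigl(2c(t-t_1)\bigr)^{2k-1}}\,dt_1,
\]
the upper limit being the value of $t_1$ beyond which $\beta+ct_1$ would exceed the ballistic reach $c(t-t_1)$ of the residual process. The computation becomes elementary via the factorization $c^2(t-t_1)^2-(\beta+ct_1)^2=(ct+\beta)(ct-\beta-2ct_1)$, after which the $t_1$-integral is just $\int_0^{(ct-\beta)/2c}(ct-\beta-2ct_1)^{k-1}\,dt_1=(ct-\beta)^k/(2ck)$; collecting the constants and using $\tfrac{4(2k-1)!}{(k-1)!^2}=\tfrac{2(2k)!}{k!(k-1)!}$ yields exactly \eqref{legge-p}.

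For the atom at $0$ I would avoid computing $P\{-cT_1+\widetilde M\le 0\}$ directly. Having established the density on $(0,ct)$, and noting that $\widetilde M$ is purely absolutely continuous on $(0,c(t-t_1))$ (for $k\ge 1$ it has $2k-1\ge 1$ reversals), one sees that the law of $\max_{0\le s\le t}\mathcal T(s)$ has no mass at $ct$, since $-cT_1+\widetilde M<-cT_1+c(t-T_1)=ct-2cT_1<ct$ almost surely, and no singular part other than the atom at $0$; therefore $P\{\max=0\mid N(t)=2k\}=1-\int_0^{ct}(\text{density})\,d\beta$. The substitution $\beta=cty$ reduces this to $\tfrac{2(2k)!}{k!(k-1)!\,2^{2k}}\int_0^1(1-y)^k(1+y)^{k-1}\,dy=\tfrac{2(2k)!}{k!(k-1)!}\int_0^{1/2}w^k(1-w)^{k-1}\,dw$, and the incomplete Beta integral $\int_0^{1/2}w^k(1-w)^{k-1}\,dw$ is evaluated by combining the identity $\int_0^{1/2}w^k(1-w)^{k-1}dw+\int_{1/2}^1 w^k(1-w)^{k-1}dw=B(k+1,k)=\tfrac{k!(k-1)!}{(2k)!}$ with one integration by parts relating the two halves; this gives $\int_0^{ct}(\text{density})\,d\beta=1-\binom{2k}{k}2^{-2k}$, hence \eqref{singolarita-p}. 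The case $k=0$ is immediate: with no reversals the particle moves leftward throughout, so $\max_{0\le s\le t}\mathcal T(s)=\mathcal T(0)=0$ with probability $1=\binom{0}{0}2^{0}$.

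The main obstacle is bookkeeping rather than conceptual: carrying the factorials and powers of $2c$ correctly through the convolution so that the constant collapses to $\tfrac{2(2k)!}{k!(k-1)!}$, and evaluating the incomplete Beta integral that governs the atom (the symmetry-plus-parts argument is the efficient route; expanding the sum form \eqref{ripmax+d} for $\widetilde M$ and integrating term by term also works but is considerably messier). A secondary point that must be stated carefully is the justification that the only singular component of the law is the atom at $0$, which is precisely what licenses recovering \eqref{singolarita-p} by normalization.
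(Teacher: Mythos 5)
Your proposal is correct and follows essentially the same route as the paper: condition on the first reversal time $T_1$ (with conditional density $2k(t-t_1)^{2k-1}/t^{2k}$), apply Theorem \ref{teoremamax+d} with $2k-1$ residual reversals on $(t_1,t)$ and starting level $-ct_1$, exploit the factorization $c^2(t-t_1)^2-(\beta+ct_1)^2=(ct+\beta)(ct-\beta-2ct_1)$, and recover the atom at $0$ by normalization. The only (immaterial) differences are that you evaluate the normalizing Beta-type integral by a symmetry-plus-integration-by-parts argument on $\int_0^{1/2}w^k(1-w)^{k-1}\,dw$ where the paper substitutes $w=\beta^2/(c^2t^2)$ and uses Gamma-function identities, and that you make explicit the (correct) observation that the atom at $0$ is the only singular component, which the paper leaves implicit.
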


\bigskip

\begin{proof}
We must consider that at time $T_1$ the telegraph particle starts moving rightward and thus in the residual time lapse $(T_1, t)$ behaves as in section 3. We must distinguish two cases:

- Case 1. where 
\begin{equation}\label{caso1}
 \beta -(-cT_1) \le c(T_1 - t)
\end{equation}
so the moving particle has enough time to cross the starting point during $(T_1, t)$;

- Case 2. where 
\begin{equation}\label{caso2}
 \beta -(-cT_1) > c(T_1 - t)
\end{equation}
so that the moving particle gets so far on the negative half-line with the first displacement that will never be able to reach level $\beta$.
\\
Thus, if case (\ref{caso1}). occurs we have that
\begin{equation}\label{passo1-}
 P\lbrace \max_{0\le s\le t} \mathcal{T}(s) \in d\beta\ |\ V(0) = -c, N(t) = 2k\rbrace =
\end{equation}
$$ = \int_0^{\frac{ct-\beta}{2c}}  P\lbrace \max_{0\le s\le t-t_1} \mathcal{T}(s) \in d\beta\ |\ V(0) = c, N(t-t_1) = 2k-1, \mathcal{T}(0) = -ct_1\rbrace \cdot$$
$$ \hspace{6cm} \cdot P\lbrace T_1 \in dt_1 \ | \ N(t) = 2k\rbrace = $$
$$ = \int_0^{\frac{ct-\beta}{2c}}  \frac{2(2k-1)!}{(k-1)!^2\bigl(2c(t-t_1)\bigr)^{2k-1}} \Bigl(c^2(t-t_1)^2 -(\beta +ct_1)^2\Bigr)^{k-1}  \frac{2k}{t^{2k}}(t-t_1)^{2k-1} dt_1$$
In the last step we applied result (\ref{leggemax+d2}) of Theorem \ref{teoremamax+d}, suitably adapted to the framework (\ref{passo1-}), i.e. we replace $2k+1$ by $2k-1$ and $t$ by $t-t_1$. In conclusion, we have that
$$P\lbrace \max_{0\le s\le t} \mathcal{T}(s) \in d\beta\ |\ V(0) = -c, N(t) = 2k\rbrace =$$
$$ =  \frac{d\beta \ 2(2k)!}{(k-1)!^2(2c)^{2k-1}t^{2k}} \int_0^{\frac{ct-\beta}{2c}} \Bigl(c^2t^2-\beta^2-2ct_1(ct+\beta)\Bigr)^{k-1} dt_1= $$
$$   = 2\frac{(2k)!}{k!(k-1)!} \frac{(c^2t^2 - \beta^2)^{k-1}(ct-\beta)}{(2ct)^{2k}} d\beta $$
and this confirms formula (\ref{legge-p}).
\\In order to obtain (\ref{singolarita-p}) we observe that
\begin{equation}\label{passo2-}
\int_0^{ct} (c^2t^2 - \beta^2)^{k-1}(ct-\beta) d\beta = \frac{(ct)^{2k}}{2} \int_0^1 (1-\sqrt w)(1-w)^{k-1} \frac{dw}{\sqrt w} =\frac{(ct)^{2k}}{2}  \Biggl( \frac{\Gamma(k)\Gamma(\frac{1}{2})}{\Gamma (k + \frac{1}{2})} -\frac{1}{k} \Biggr)
\end{equation}
\\
In view of (\ref{passo2-}) we have therefore that
$$ \int_0^{ct} P\lbrace \max_{0\le s\le t} \mathcal{T}(s) \in d\beta\ |\ V(0) = -c, N(t) = 2k\rbrace = $$
$$ =  2\frac{(2k)!}{k!(k-1)!} \frac{(ct)^{2k}}{2 (2ct)^{2k}}  \Biggl( \frac{(k-1)!\sqrt \pi (k-1)!}{\sqrt \pi 2^{1-2k} (2k-1)!} -\frac{1}{k} \Biggr) = 1-\frac{(2k)!}{k!^2}\frac{1}{2^{2k}} =$$ 
$$ =  1- P\lbrace \max_{0\le s\le t} \mathcal{T}(s) =0\ |\ V(0) = -c, N(t) = 2k\rbrace$$
as claimed in (\ref{singolarita-p}). 
\end{proof}

\bigskip

\begin{remark}\label{sing}
The singularity
$$  P\lbrace \max_{0\le s\le t} \mathcal{T}(s) =0\ |\ V(0) = -c, N(t) = 2k\rbrace = \frac{(2k)!}{k!^2}\frac{1}{2^{2k}} $$
for large values of $k$ decreases as $\frac{1}{\sqrt{ \pi k}}$ as the application of Stirling's formula shows.
\hfill $\Diamond$
\end{remark}

\bigskip

\begin{theorem}\label{teorema-d}
Let $\lbrace \mathcal{T}(t) \rbrace_{t\ge0}$ be a symmetric telegraph process, then
$$P\lbrace \max_{0\le s\le t} \mathcal{T}(s) \in d\beta\ |\ V(0) = -c, N(t) = 2k+1\rbrace =$$
\begin{equation}\label{legge-d}
  =\Biggl( \frac{(2k+1)!}{(k-1)!(k+1)!} \frac{(c^2t^2 - \beta^2)^{k-1}(ct-\beta)}{(2ct)^{2k}} +\frac{(2k+1)!}{k!(k+1)!} \frac{(c^2t^2 - \beta^2)^{k}}{(2ct)^{2k+1}}  \Biggr)d\beta 
\end{equation}
for $0<\beta<ct$, $k \in \mathbb{N}_0$ and
\begin{equation}\label{singolarita-d}
  P\lbrace \max_{0\le s\le t} \mathcal{T}(s) = 0\ |\ V(0) = -c, N(t) = 2k+1\rbrace = \binom{2k+1}{k}\frac{1}{2^{2k+1}}
\end{equation}
\end{theorem}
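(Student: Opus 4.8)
The plan is to condition on the instant $T_1$ of the first velocity reversal and reduce the problem to the rightward case of Section 3. Since $V(0)=-c$, on $[0,T_1]$ the path decreases monotonically from $0$ to $-cT_1$, so for $\beta>0$ the supremum is attained on $[T_1,t]$, where the particle moves with velocity $+c$ from $-cT_1$ with $N(t-T_1)=2k$ residual Poisson events. Hence $\lbrace\max_{0\le s\le t}\mathcal{T}(s)=\beta\rbrace=\lbrace\widetilde M=\beta+cT_1\rbrace$, where $\widetilde M$ is the maximum of a telegraph process with $V(0)=c$ and $N(t-T_1)=2k$. Such a maximum reaches level $\beta+cT_1$ only if $\beta+cT_1<c(t-T_1)$, i.e. $T_1<\frac{ct-\beta}{2c}$ (Case 1); for $T_1\ge\frac{ct-\beta}{2c}$ the continuation stays strictly below $\beta$ and feeds only the atoms (at $0$ or at sub-$\beta$ values), so it contributes nothing to the density at a fixed $\beta\in(0,ct)$. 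Thus only Case 1 enters (\ref{legge-d}).

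Then I would combine the density of the first of $2k+1$ uniform order statistics, $P\lbrace T_1\in dt_1\mid N(t)=2k+1\rbrace=\frac{(2k+1)(t-t_1)^{2k}}{t^{2k+1}}dt_1$, with the density of the maximum for $V(0)=c$ and an even number $2k$ of reversals (Theorem \ref{teoremamax+p}, formula (\ref{leggemax+p2}) with $k-1$ in place of $k$), adapted by the replacements $t\mapsto t-t_1$, $\beta\mapsto\beta+ct_1$, obtaining
$$P\lbrace\max_{0\le s\le t}\mathcal{T}(s)\in d\beta\mid V(0)=-c,N(t)=2k+1\rbrace=d\beta\int_0^{\frac{ct-\beta}{2c}}\frac{2(2k-1)!\bigl(c^2(t-t_1)^2-(\beta+ct_1)^2\bigr)^{k-1}}{(k-1)!^2\bigl(2c(t-t_1)\bigr)^{2k-1}}\cdot\frac{(2k+1)(t-t_1)^{2k}}{t^{2k+1}}\,dt_1.$$
Since $c^2(t-t_1)^2-(\beta+ct_1)^2=(ct+\beta)(ct-\beta-2ct_1)$, collecting constants leaves $(ct+\beta)^{k-1}\int_0^{\frac{ct-\beta}{2c}}(t-t_1)(ct-\beta-2ct_1)^{k-1}dt_1$, and the substitution $ct-\beta-2ct_1=(ct-\beta)v$ reduces this to $\frac{(ct-\beta)^k}{4c^2}\int_0^1\bigl(ct+\beta+(ct-\beta)v\bigr)v^{k-1}dv=\frac{(ct-\beta)^k\bigl[(2k+1)ct+\beta\bigr]}{4c^2k(k+1)}$. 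Substituting back, simplifying the factorials, and splitting $(2k+1)ct+\beta=2kct+(ct+\beta)$ together with $(ct+\beta)(ct-\beta)=c^2t^2-\beta^2$ produces exactly the two summands of (\ref{legge-d}).

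The atom (\ref{singolarita-d}) I would obtain as $1-\int_0^{ct}\bigl(\text{r.h.s. of }(\ref{legge-d})\bigr)d\beta$. The first summand integrates by (\ref{passo2-}), and $\int_0^{ct}(c^2t^2-\beta^2)^kd\beta=\frac{(ct)^{2k+1}}{2}B(\tfrac12,k+1)$ handles the second; applying the duplication formula (\ref{duplicazionegamma}) to the two Gamma quotients and simplifying gives $\int_0^{ct}(\ldots)d\beta=1-\binom{2k+1}{k}2^{-(2k+1)}$, whence $P\lbrace\max_{0\le s\le t}\mathcal{T}(s)=0\mid V(0)=-c,N(t)=2k+1\rbrace=\binom{2k+1}{k}2^{-(2k+1)}$.

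The main obstacle is the bookkeeping in the middle paragraph: inserting the shift and time-change into Theorem \ref{teoremamax+p} correctly, carrying the binomial and factorial constants through the integration, and recognising the algebraic rearrangement of $(2k+1)ct+\beta$ that yields the precise two-term form of (\ref{legge-d}); a subsidiary point needing care is the verification that a $t_1$ in Case 2 can never realise the value $\beta$. Finally the degenerate case $k=0$ (that is $N(t)=1$) should be checked directly: the continuation has no events, so $\max$ is uniform on $(0,ct)$ with atom $\tfrac12$ at $0$, in agreement with (\ref{legge-d})--(\ref{singolarita-d}) under the convention $1/(-1)!=0$.
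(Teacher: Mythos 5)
Your proposal is correct and follows essentially the same route as the paper: conditioning on the first reversal time $T_1$, restricting to $T_1<\frac{ct-\beta}{2c}$, inserting formula (\ref{leggemax+p2}) with the shift $\beta\mapsto\beta+ct_1$ and $t\mapsto t-t_1$, and recovering the atom by complementation via (\ref{passo2-}) and a Beta integral; the only differences are cosmetic (a linear substitution in place of the paper's integration by parts, plus your explicit check of the degenerate case $k=0$, which the paper leaves implicit).
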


\bigskip

\begin{proof}
Also here we must distinguish the two cases (\ref{caso1}) and (\ref{caso2}). In the first one we have 
$$P\lbrace \max_{0\le s\le t} \mathcal{T}(s) \in d\beta\ |\ V(0) = -c, N(t) = 2k+1\rbrace =$$
$$ = \int_0^{\frac{ct-\beta}{2c}}  P\lbrace \max_{0\le s\le t-t_1} \mathcal{T}(s) \in d\beta\ |\ V(0) = c, N(t-t_1) = 2k, \mathcal{T}(0) = -ct_1\rbrace \cdot$$
$$ \hspace{6cm} \cdot P\lbrace T_1 \in dt_1 \ | \ N(t) = 2k+1\rbrace = $$
(by applying (\ref{leggemax+p2}) )
$$ = d\beta\int_0^{\frac{ct-\beta}{2c}}  \frac{2(2k-1)!}{(k-1)!^2\bigl(2c(t-t_1)\bigr)^{2k-1}} \Bigl(c^2(t-t_1)^2 -(\beta +ct_1)^2\Bigr)^{k-1}  \frac{2k+1}{t^{2k+1}}(t-t_1)^{2k} dt_1 = $$
$$ =   \frac{d\beta\ (2k+1)! (ct+\beta)^{k-1}}{k!(k-1)!(2c)^{2k-1}t^{2k+1}}  \int_0^{\frac{ct-\beta}{2c}}  (ct-\beta -2ct_1)^{k-1} (t-t_1) dt_1 =$$
$$ =    \frac{(2k+1)! (ct+\beta)^{k-1}}{k!(k-1)!(2c)^{2k-1}t^{2k+1}} \Biggl( \frac{t(ct-\beta)^k}{2ck} - \frac{(ct-\beta)^{k+1}}{(2c)^2k(k+1)}\Biggr)d\beta =  $$
$$ =  \frac{ (2k+1)!(c^2t^2-\beta^2)^{k-1}(ct-\beta)}{k!^2(2ct)^{2k}}  \Biggl( 1 - \frac{ct-\beta}{2ct(k+1)}\Biggr)  d\beta =$$
$$ =  \frac{ (2k+1)!(c^2t^2-\beta^2)^{k-1}(ct-\beta)}{k!^2(2ct)^{2k}}  \Biggl( \frac{k}{k+1} + \frac{ct+\beta}{2ct(k+1)}\Biggr)  d\beta =$$
$$  =\Biggl( \frac{(2k+1)!}{(k-1)!(k+1)!} \frac{(c^2t^2 - \beta^2)^{k-1}(ct-\beta)}{(2ct)^{2k}} +\frac{(2k+1)!}{k!(k+1)!} \frac{(c^2t^2 - \beta^2)^{k}}{(2ct)^{2k+1}}  \Biggr)d\beta $$
\\

For the evaluation of the singular component of the distribution we need (\ref{passo2-}) and
$$\int_0^{ct} (c^2t^2 -\beta^2)^{k} d\beta = \frac{(ct)^{2k+1}}{2} \frac{\Gamma(k+1)\Gamma(\frac{1}{2})}{\Gamma (k +1+ \frac{1}{2})}$$
Thus
$$ P\lbrace \max_{0\le s\le t} \mathcal{T}(s) = 0\ |\ V(0) = -c, N(t) = 2k+1\rbrace =$$
$$ = 1-\Biggl[\frac{(2k+1)!}{(k-1)!(k+1)!} \frac{(ct)^{2k}}{2(2ct)^{2k}}  \Biggl(\frac{(k-1)!\sqrt \pi (k-1)!}{\sqrt \pi 2^{1-2k} (2k-1)!} -\frac{1}{k}\Biggr) \  +$$
$$+\ \frac{(2k+1)!}{k!(k+1)!}   \frac{(ct)^{2k+1}}{2(2ct)^{2k+1}}\frac{k!\sqrt \pi k!}{\sqrt \pi 2^{1-2k-2} (2k+1)!}  \Biggr] = $$
$$ =1-\Biggl[ \frac{2k+1}{2k+2} - \frac{(2k+1)!}{k!(k+1)!}\frac{1}{2^{2k+1}} + \frac{1}{2k+2}\Biggr] = \binom{2k+1}{k}\frac{1}{2^{2k+1}} $$
\end{proof}

\bigskip

\begin{remark}
We notice that the probability mass of the singularity of the conditioned telegraph process starting with a negative direction depends on the number of changes of direction only, so it is independent of both time $t$ and speed $c$. 
\\Furthermore, we have that
$$P\lbrace \max_{0\le s\le t} \mathcal{T}(s) = 0\ |\ V(0) = -c, N(t) = 2k+1\rbrace = \frac{(2k+1)!}{k!(k+1)!}\frac{1}{2^{2k+1}} \cdot \frac{2k+2}{2(k+1)} = $$
$$ = P\lbrace \max_{0\le s\le t} \mathcal{T}(s) = 0\ |\ V(0) = -c, N(t) = 2k+2\rbrace$$
and
$$ P\lbrace \max_{0\le s\le t} \mathcal{T}(s) = 0\ |\ V(0) = -c, N(t) = 2k\rbrace = \frac{(2k)!}{k!^2}\frac{1}{2^{2k}}\ >\ \frac{(2k)!}{k!^2}\frac{1}{2^{2k}} \cdot \frac{2k+1}{2k+2}\ =$$
\begin{equation}\label{sing2}
=\ P\lbrace \max_{0\le s\le t} \mathcal{T}(s) = 0\ |\ V(0) = -c, N(t) = 2k+1\rbrace
\end{equation}
Therefore, we can extend the statement of Remark \ref{sing}.\hfill $\Diamond$
\end{remark}

\bigskip

\begin{remark}
In view of (\ref{leggemax+p2}) and (\ref{legge-p}) we can also write, for $0 < \beta < ct$
$$P\lbrace \max_{0\le s\le t} \mathcal{T}(s) \in d\beta\ |\ V(0) = -c, N(t) = 2k+1\rbrace =$$
$$  =\Biggl( \frac{(2k+1)!}{(k-1)!(k+1)!} \frac{(c^2t^2 - \beta^2)^{k-1}(ct-\beta)}{(2ct)^{2k}} +\frac{(2k+1)!}{k!(k+1)!} \frac{(c^2t^2 - \beta^2)^{k}}{(2ct)^{2k+1}}  \Biggr)d\beta =  $$
$$=\frac{2k+1}{2k+2} P\lbrace \max_{0\le s\le t} \mathcal{T}(s) \in d\beta\ |\ V(0) = -c, N(t) = 2k\rbrace\ +$$
\begin{equation}\label{sommapesata}
+\ \frac{1}{2k+2}P\lbrace \max_{0\le s\le t} \mathcal{T}(s) \in d\beta\ |\ V(0) = c, N(t) = 2k+1\rbrace
\end{equation}
\\
The last relationship shows that the maximum of $\mathcal{T}(t)$ under the condition $V(0) = -c, N(t) = 2k+1$ is a weighted sum of two previously obtained distributions, with a prevailing weight for the case of the leftward starting motion. 
\hfill $\Diamond$
\end{remark}

\bigskip

\begin{remark}
For the unconditional distributions we have that, for $0 < \beta < ct$
$$P \lbrace \max_{0\le s\le t} \mathcal{T}(s)  \in d \beta, N(t)\ even |\ V(0) = -c\rbrace  = $$
$$=  d\beta\sum_{k=1}^\infty \frac{2(2k)!}{k!(k-1)!} \frac{(c^2t^2 - \beta^2)^{k-1}(ct-\beta)}{(2ct)^{2k}} e^{-\lambda t}\frac{(\lambda t)^{2k}}{(2k)!} = $$
$$ = d\beta\frac{(ct-\beta)e^{-\lambda t} \lambda}{c\sqrt{c^2t^2 -\beta^2}} \sum_{k=0}^\infty \Bigl(\frac{\sqrt{c^2t^2 -\beta^2}}{2c}\Bigr)^{2k+1} \frac{1}{k!(k+1)!} = $$
\begin{equation}\label{leggecongiuntapari-}
 =e^{-\lambda t}\frac{ \lambda(ct-\beta)}{c\sqrt{c^2t^2 -\beta^2}} I_1\Bigl( \frac{\lambda }{c} \sqrt{c^2t^2 -\beta^2} \Bigr) d\beta \ =\ \frac{ e^{-\lambda t}}{c} \Bigl( 1-\frac{\beta}{ct}\Bigr)\frac{\partial }{\partial t}  I_0\Bigl( \frac{\lambda }{c} \sqrt{c^2t^2 -\beta^2} \Bigr)d\beta
\end{equation} 
\\
For the other initially left-oriented motion we have
$$P \lbrace \max_{0\le s\le t} \mathcal{T}(s)  \in d \beta , N(t)\ odd |\ V(0) = -c\rbrace  = $$
$$=  d\beta\sum_{k=0}^\infty  \Biggl( \frac{(2k+1)!}{(k-1)!(k+1)!} \frac{(c^2t^2 - \beta^2)^{k-1}(ct-\beta)}{(2ct)^{2k}} +\frac{(2k+1)!}{k!(k+1)!} \frac{(c^2t^2 - \beta^2)^{k}}{(2ct)^{2k+1}}  \Biggr) e^{-\lambda t}\frac{(\lambda t)^{2k+1}}{(2k+1)!} =$$
$$ =  e^{-\lambda t} \Biggl[ \sum_{k=0}^\infty  \frac{(c^2t^2 - \beta^2)^{k}}{k!(k+1)!} \Bigl( \frac{\lambda}{2c}\Bigr)^{2k+1} +(ct-\beta) \lambda t\sum_{k=1}^\infty\frac{(c^2t^2 - \beta^2)^{k-1}}{(k-1)!(k+1)!} \Bigl( \frac{\lambda}{2c}\Bigr)^{2k}  \Biggr] d\beta=  $$
$$  =  e^{-\lambda t} \Biggl[ \frac{1}{\sqrt{c^2t^2-\beta^2}}I_1\Bigl( \frac{\lambda }{c} \sqrt{c^2t^2 -\beta^2} \Bigr) + \frac{\lambda t(ct-\beta)}{c^2t^2-\beta^2}\sum_{k=0}^\infty \Bigl(  \frac{\lambda }{2c} \sqrt{c^2t^2 -\beta^2}\Bigr)^{2k+2}\frac{1}{k!\Gamma(k+1+2)} \Biggr] d\beta = $$
\begin{equation}\label{congiuntadisp-}
 = e^{-\lambda t} \Biggl[ \frac{1}{\sqrt{c^2t^2-\beta^2}}I_1\Bigl( \frac{\lambda }{c} \sqrt{c^2t^2 -\beta^2} \Bigr) + \frac{\lambda t(ct-\beta)}{c^2t^2-\beta^2} I_2\Bigl( \frac{\lambda }{c} \sqrt{c^2t^2 -\beta^2} \Bigr) \Biggr] d\beta 
\end{equation}
\\
The well-known relationship, with $r$ integer
\begin{equation}\label{bessel}
I_{r+1}(x) = I_{r-1}(x) - \frac{2r}{x}I_r(x)
\end{equation}
for $r=1$ permits us to simplify (\ref{congiuntadisp-}) as
$$P \lbrace \max_{0\le s\le t} \mathcal{T}(s)  \in d \beta, N(t)\ odd |\ V(0) = -c\rbrace  =$$
$$ = \frac{e^{-\lambda t}}{\sqrt{c^2t^2-\beta^2}} I_1\Bigl( \frac{\lambda }{c} \sqrt{c^2t^2 -\beta^2} \Bigr)d\beta \  +e^{-\lambda t}\frac{\lambda t(ct-\beta)}{c^2t^2-\beta^2} \Biggl[ I_0\Bigl( \frac{\lambda }{c} \sqrt{c^2t^2 -\beta^2} \Bigr) -2 \frac{I_1\Bigl( \frac{\lambda }{c} \sqrt{c^2t^2 -\beta^2} \Bigr)}{\frac{\lambda}{c}  \sqrt{c^2t^2 -\beta^2}} \Biggr] d\beta = $$ 
$$ =  \frac{e^{-\lambda t}}{\sqrt{c^2t^2-\beta^2}} \Bigl(1- \frac{2ct(ct-\beta)}{c^2t^2-\beta^2}\Bigr)I_1\Bigl( \frac{\lambda }{c} \sqrt{c^2t^2 -\beta^2} \Bigr)d\beta \ +e^{-\lambda t}\frac{\lambda t(ct-\beta)}{c^2t^2-\beta^2}I_0\Bigl( \frac{\lambda }{c} \sqrt{c^2t^2 -\beta^2} \Bigr)d\beta =$$
$$ = \frac{e^{-\lambda t}}{\sqrt{c^2t^2-\beta^2}}\Bigl(- \frac{(ct-\beta)^2}{c^2t^2-\beta^2}\Bigr)I_1\Bigl( \frac{\lambda }{c} \sqrt{c^2t^2 -\beta^2} \Bigr)d\beta +\frac{e^{-\lambda t}\lambda t}{ct+\beta}I_0\Bigl( \frac{\lambda }{c} \sqrt{c^2t^2 -\beta^2} \Bigr)d\beta = $$ 
\begin{equation}\label{congdisp}
 = \frac{e^{-\lambda t}}{ct+\beta} \Biggl[\lambda t I_0\Bigl( \frac{\lambda }{c} \sqrt{c^2t^2 -\beta^2} \Bigr) - \sqrt{\frac{ct-\beta}{ct+\beta}}I_1\Bigl( \frac{\lambda }{c} \sqrt{c^2t^2 -\beta^2} \Bigr)\Biggr] d\beta =
\end{equation}
$$ = \frac{ct}{ct+\beta} P \lbrace \max_{0\le s\le t} \mathcal{T}(s)  \in d \beta , N(t)\ odd |\ V(0) = c\rbrace \ +$$
$$-\ \frac{c}{\lambda(ct+\beta)} P \lbrace \max_{0\le s\le t} \mathcal{T}(s)  \in d \beta , N(t)\ even |\ V(0) = -c\rbrace $$
where in the last step we applied formulas (\ref{leggecongiuntadispari}) and (\ref{leggecongiuntapari-}).
\hfill $\Diamond$
\end{remark}

\bigskip

\begin{remark}
At last, we also have that, for $0 < \beta < ct$
$$P \lbrace \max_{0\le s\le t} \mathcal{T}(s)  \in d \beta\ |\ V(0) = -c\rbrace  =$$ 
\begin{equation}\label{max-}
= e^{-\lambda t}\Biggl[  \frac{\lambda t
}{ct+\beta}   I_0\Bigl( \frac{\lambda }{c} \sqrt{c^2t^2 -\beta^2} \Bigr) +\sqrt{\frac{ct-\beta}{ct+\beta}}\Bigl(\frac{\lambda}{c} -\frac{1}{ct+\beta} \Bigr)I_1\Bigl( \frac{\lambda }{c} \sqrt{c^2t^2 -\beta^2} \Bigr)\Biggr ]d\beta
\end{equation}
and
\begin{equation}\label{maxsing-}
P \lbrace \max_{0\le s\le t} \mathcal{T}(s) =0\ |\ V(0) = -c\rbrace  \ =\ e^{-\lambda t}\Biggl[ I_0\Bigl( \lambda t \Bigr) +I_1\Bigl( \lambda t \Bigr)\Biggr ]
\end{equation}
\\
By considering the aymptotic estimate of the Bessel functions, $x \longrightarrow \infty$
$$ I_{\nu} (x) \sim \frac{e^{x}}{\sqrt{2\pi x}}$$
for $\nu  = 0,1,2 $; it is possible to check that under Kac's condition the maximum of  the telegraph process with $V(0) = -c$ converges to the maximum of Brownian motion.
Hence, if $\lbrace B(t) \rbrace_{t\ge 0}$ is a standard Brownian motion, we claim that, under Kac's condition 
$$\max_{0\le s\le t} \mathcal{T}(s)  \stackrel {d}{\longrightarrow } \max_{0\le s\le t}  B(s) $$
\hfill $\Diamond$
\end{remark}

\bigskip

\section{Some special cases}

From (\ref{leggemax+p2}) we can derive the distribution of the maximum of the initially right-oriented telegraph process. We state this result in the next theorem.

\begin{corollary}
Let $\lbrace \mathcal{T}(t) \rbrace_{t\ge0}$ be a symmetric telegraph process, then
$$P\lbrace \max_{0\le s\le t} \mathcal{T}(s) < \beta\ |\ V(0) = c, N(t) = 2k+2\rbrace = $$
\begin{equation}\label{rip+d}
=P\lbrace \max_{0\le s\le t} \mathcal{T}(s) < \beta\ |\ V(0) = c, N(t) = 2k+1\rbrace =\frac{\beta}{ct} \sum_{j=0}^k \binom{2j}{j}\Bigl( \frac{\sqrt{c^2t^2-\beta^2}}{2ct}\Bigr)^{2j} 
\end{equation}
for $0<\beta < ct,\ k \in \mathbb{N}_0$.
\end{corollary}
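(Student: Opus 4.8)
The plan is to obtain (\ref{rip+d}) by integrating the density (\ref{leggemax+p2}) and identifying the primitive, which I would do by guessing the primitive (the right-hand side of (\ref{rip+d})) and checking it by differentiation. First I observe that by Theorem \ref{teoremamax+p} the conditional law of the maximum given $N(t)=2k+2$ coincides with the one given $N(t)=2k+1$, so it suffices to establish the formula for $N(t)=2k+1$. Since $V(0)=c$ the particle starts moving rightward, whence $\max_{0\le s\le t}\mathcal{T}(s)\ge cT_1>0$ a.s., and since $N(t)\ge1$ the maximum stays strictly below $ct$; consequently the conditional maximum has no atoms on $[0,ct]$ and its distribution function is $P\{\max<\beta\}=\int_0^\beta 2\frac{(2k+1)!}{k!^2}\frac{(c^2t^2-y^2)^k}{(2ct)^{2k+1}}\,dy$.

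Next I would set $F_k(\beta)$ equal to the right-hand side of (\ref{rip+d}) and verify that $F_k(0)=0$ (immediate) and that $F_k'(\beta)$ equals the density in (\ref{leggemax+p2}). Writing $u=u(\beta)=\sqrt{c^2t^2-\beta^2}$, so that $u\,u'=-\beta$, differentiation of $F_k(\beta)=\frac{\beta}{ct}\sum_{j=0}^k\binom{2j}{j}\frac{u^{2j}}{(2ct)^{2j}}$ yields
\[
F_k'(\beta)=\frac{1}{ct}\sum_{j=0}^k\binom{2j}{j}\frac{u^{2j}}{(2ct)^{2j}}-\frac{\beta^2}{ct}\sum_{j=1}^k 2j\binom{2j}{j}\frac{u^{2j-2}}{(2ct)^{2j}}.
\]
I would then substitute $\beta^2=c^2t^2-u^2$ in the second sum, split it into its $c^2t^2$-part and its $u^2$-part, and absorb the $u^2$-part into the first sum; this makes the first sum become $\frac{1}{ct}\sum_{j=0}^k(2j+1)\binom{2j}{j}\frac{u^{2j}}{(2ct)^{2j}}$ and leaves the term $-ct\sum_{j=1}^k 2j\binom{2j}{j}\frac{u^{2j-2}}{(2ct)^{2j}}$. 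Reindexing this last sum by $i=j-1$ and using the elementary identity $(i+1)\binom{2i+2}{i+1}=2(2i+1)\binom{2i}{i}$ rewrites it as $-\frac{1}{ct}\sum_{i=0}^{k-1}(2i+1)\binom{2i}{i}\frac{u^{2i}}{(2ct)^{2i}}$, so the two sums telescope, leaving only the top term $\frac{1}{ct}(2k+1)\binom{2k}{k}\frac{u^{2k}}{(2ct)^{2k}}$. Since $\frac{1}{ct(2ct)^{2k}}=\frac{2}{(2ct)^{2k+1}}$ and $(2k+1)\binom{2k}{k}=\frac{(2k+1)!}{k!^2}$, this is precisely the density in (\ref{leggemax+p2}); together with $F_k(0)=0$ this identifies $F_k$ with the distribution function and proves the corollary.

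The step I expect to be the main obstacle is the bookkeeping in the telescoping: one must notice that the factor $2j$ produced by differentiating $u^{2j}$ combines with $\binom{2j}{j}$ and the power of $2ct$ in exactly the way needed to reproduce the $(j-1)$-st summand, and one must track the endpoint indices $j=0$ and $j=k$ carefully so that only the single top term survives. A less elegant alternative, if one prefers to integrate directly, is to establish a reduction formula for $\int_0^\beta(c^2t^2-y^2)^k\,dy$ by repeated integration by parts and then resum; this also works but is messier, so the differentiation route above is preferable.
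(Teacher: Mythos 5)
Your proposal is correct, and the verification goes through: the telescoping identity $(i+1)\binom{2i+2}{i+1}=2(2i+1)\binom{2i}{i}$ is right, the surviving top term $\frac{1}{ct}(2k+1)\binom{2k}{k}\frac{u^{2k}}{(2ct)^{2k}}=\frac{2(2k+1)!}{k!^2}\frac{(c^2t^2-\beta^2)^k}{(2ct)^{2k+1}}$ is exactly the density of Theorem \ref{teoremamax+p}, and your preliminary remarks (reduction to the odd case via Theorem \ref{teoremamax+p}; absence of atoms since $0<\max<ct$ a.s.\ when $V(0)=c$ and $N(t)\ge1$) justify writing the distribution function as the integral of the density. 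However, your route is genuinely different from the paper's. The paper computes $\int_0^\beta(c^2t^2-x^2)^k\,dx$ constructively: it substitutes $x=ct\sin\phi$, derives the reduction formula (\ref{intcoseno}) for $\int\cos^{2k+1}\phi\,d\phi$ by parts, iterates it down to $j=k$, and then resums the resulting finite sum of products of odd numbers into the binomial form (\ref{intperrip}) — essentially the ``less elegant alternative'' you mention at the end. Your guess-and-differentiate argument buys a shorter verification with a single clean telescoping step and no resummation bookkeeping, at the price of having to know the closed form in advance; the paper's integration buys a derivation that would produce the closed form even if it had not been conjectured, and its intermediate formula (\ref{intperrip}) is reused later in the proof of the corollary for $V(0)=-c$, $N(t)=2k$, which your approach would have to re-establish separately.
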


\begin{proof}
We begin by evaluating the following integral:
$$ \int_0^\beta (c^2t^2 - x^2)^k dx = \ \ \ \ \ \Bigl(x = ct\sin \phi \Bigr)$$
$$ = (ct)^{2k+1}  \int_0^{\arcsin\frac{\beta}{ct} } (\cos\phi)^{2k+1} d\phi $$
By integrating by parts we obtain the recurrence formula
\begin{equation}\label{intcoseno}
I_{2k+1} = \int_0^{\arcsin\frac{\beta}{ct} } (\cos\phi)^{2k+1} d\phi = \Bigl( 1-\frac{\beta^2}{c^2t^2}\Bigr)^k \frac{\beta}{ct(2k+1)} + \frac{2k}{2k+1} I_{2k-1}
\end{equation}
\\
Thus
$$ \int_0^\beta (c^2t^2 - x^2)^k dx =  (ct)^{2k+1}  \frac{\beta}{ct} \Biggl( \Bigl( 1-\frac{\beta^2}{c^2t^2}\Bigr)^k \frac{1}{(2k+1)} + \Bigl( 1-\frac{\beta^2}{c^2t^2}\Bigr)^{k-1 }\frac{2k}{(2k+1)(2k-1)} +$$
$$ +\  \Bigl( 1-\frac{\beta^2}{c^2t^2}\Bigr)^{k-2 }\frac{(2k)(2k-2)}{(2k+1)(2k-1)(2k-3) } +\ .\ .\ .\  + $$
$$ + \Bigl( 1-\frac{\beta^2}{c^2t^2}\Bigr)^{k-j }\frac{(2k)(2k-2)\cdot \cdot \cdot (2k-2j+2)}{(2k+1)(2k-1)\cdot \cdot \cdot (2k-2j+1) } + \ .\ .\ .\ + \frac{(2k)(2k-2)\cdot \cdot \cdot 2}{(2k+1)(2k-1)\cdot \cdot \cdot 1 }  \Biggr) = $$
$$ = (ct)^{2k}\beta \sum_{j=0}^k \Bigl(  1-\frac{\beta^2}{c^2t^2}\Bigr)^{k-j} \Bigl(\frac{2^j k!}{(k-j)!} \Bigr)^2 \frac{(2k-2j)!}{(2k+1)!} = $$
$$ = \beta\frac{(ct)^{2k}k!^2}{(2k+1)!} \sum_{j=0}^k \Bigl(  \frac{c^2t^2-\beta^2}{c^2t^2}\Bigr)^{k-j} 2^{2j} \frac{(2k-2j)!}{(k-j)!^2} = $$
\begin{equation}\label{intperrip}
 =  \beta\frac{(2ct)^{2k}k!^2}{(2k+1)!} \sum_{j=0}^k \Bigl( \frac{c^2t^2-\beta^2}{c^2t^2}\Bigr)^j\frac{1}{2^{2j}}\frac{(2j)!}{j!^2} 
\end{equation}
\\
In view of (\ref{intperrip}) we have
$$P\lbrace \max_{0\le s\le t} \mathcal{T}(s) < \beta\ |\ V(0) = c, N(t) = 2k+2\rbrace  = P\lbrace \max_{0\le s\le t} \mathcal{T}(s) < \beta\ |\ V(0) = c, N(t) = 2k+1\rbrace = $$
$$ = \int_0^\beta \frac{2(2k+1)!}{k!^2} \frac{(c^2t^2 - x^2)^k}{(2ct)^{2k+1}}dx =$$
$$ =  \frac{2(2k+1)!}{k!^2(2ct)^{2k+1}}  \beta\frac{(2ct)^{2k}k!^2}{(2k+1)!} \sum_{j=0}^k \Bigl( \frac{c^2t^2-\beta^2}{c^2t^2}\Bigr)^j\frac{1}{2^{2j}}\frac{(2j)!}{j!^2} =  $$
$$ = \frac{\beta}{ct} \sum_{j=0}^k \Bigl( \frac{c^2t^2-\beta^2}{c^2t^2}\Bigr)^j \frac{1}{2^{2j}}\binom{2j}{j} $$
\end{proof}

\bigskip

From  formula (\ref{rip+d}) we obtain that, for $0<\beta<ct$
$$P\lbrace \max_{0\le s\le t} \mathcal{T}(s) < \beta\ |\ V(0) = c, N(t) =1\rbrace  = \frac{\beta}{ct}$$
$$P\lbrace \max_{0\le s\le t} \mathcal{T}(s) < \beta\ |\ V(0) = c, N(t) =3\rbrace  = \frac{\beta}{2c^3t^3}(3c^2t^2-\beta^2) $$
$$P\lbrace \max_{0\le s\le t} \mathcal{T}(s) < \beta\ |\ V(0) = c, N(t) =5\rbrace  = \frac{\beta}{(ct)^5}\Bigl[ 5(c^2t^2-\beta^2)(3c^2t^2+\beta^2)+ \beta^4\Bigr]$$
\\
These formulas coincide with those reported in table 1 of Orsingher (1990).

\bigskip

Analogously we can find the distribution function related to (\ref{legge-p}).

\begin{corollary}
Let $\lbrace \mathcal{T}(t) \rbrace_{t\ge0}$ be a symmetric telegraph process, then
$$P\lbrace \max_{0\le s\le t} \mathcal{T}(s) < \beta\ |\ V(0) = -c, N(t) = 2k\rbrace = $$
\begin{equation}\label{rip-p}
 =\frac{\beta}{ct} \sum_{j=0}^{k-1} \Bigl( \frac{c^2t^2-\beta^2}{c^2t^2}\Bigr)^j \frac{1}{2^{2j}}\binom{2j}{j} +  \frac{(c^2t^2-\beta^2)^k }{(2ct)^{2k}}\binom{2k}{k}
\end{equation}
for $0<\beta < ct,\ k \in \mathbb{N}$.
\end{corollary}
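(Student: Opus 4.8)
The plan is to integrate the density (\ref{legge-p}) of Theorem \ref{teorema-p} and to add the atom (\ref{singolarita-p}) at the origin, using that for $0<\beta<ct$ one has $\{\max_{0\le s\le t}\mathcal{T}(s)<\beta\}\supseteq\{\max_{0\le s\le t}\mathcal{T}(s)=0\}$, so that
$$P\lbrace \max_{0\le s\le t} \mathcal{T}(s) < \beta\ |\ V(0) = -c, N(t) = 2k\rbrace = \binom{2k}{k}\frac{1}{2^{2k}} + \int_0^\beta \frac{2(2k)!}{k!(k-1)!}\frac{(c^2t^2-x^2)^{k-1}(ct-x)}{(2ct)^{2k}}\,dx.$$

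First I would record the pointwise relation, valid for $0<x<ct$,
$$\frac{2(2k)!}{k!(k-1)!}\frac{(c^2t^2-x^2)^{k-1}(ct-x)}{(2ct)^{2k}} = \Bigl(1-\frac{x}{ct}\Bigr)\,\frac{2(2k-1)!}{(k-1)!^2}\frac{(c^2t^2-x^2)^{k-1}}{(2ct)^{2k-1}},$$
that is, the density (\ref{legge-p}) equals $(1-x/(ct))$ times the density (\ref{leggemax+d2}) with $2k+1$ replaced by $2k-1$; this is an immediate factorial check on the ratio of the two expressions. Splitting the integral accordingly, its first part is $P\lbrace\max_{0\le s\le t}\mathcal{T}(s)<\beta\ |\ V(0)=c,\,N(t)=2k-1\rbrace$, which is known in closed form from (\ref{rip+d}) (equivalently, from the evaluation (\ref{intperrip}) with $k$ replaced by $k-1$), while its second part is the weighted integral $\frac1{ct}\int_0^\beta x\cdot\frac{2(2k-1)!}{(k-1)!^2}\frac{(c^2t^2-x^2)^{k-1}}{(2ct)^{2k-1}}\,dx$.

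Then I would evaluate this last integral by the substitution $u=c^2t^2-x^2$, which reduces it to $\int_{c^2t^2-\beta^2}^{c^2t^2}u^{k-1}\,du$ and gives, after collecting binomial coefficients,
$$\frac1{ct}\int_0^\beta x\cdot\frac{2(2k-1)!}{(k-1)!^2}\frac{(c^2t^2-x^2)^{k-1}}{(2ct)^{2k-1}}\,dx = \binom{2k}{k}\frac{1}{2^{2k}} - \binom{2k}{k}\frac{(c^2t^2-\beta^2)^k}{(2ct)^{2k}}.$$
Substituting this back, the term $\binom{2k}{k}2^{-2k}$ coming from the lower endpoint $x=0$ cancels the atom (\ref{singolarita-p}), and what remains is
$$P\lbrace \max_{0\le s\le t} \mathcal{T}(s) < \beta\ |\ V(0) = -c, N(t) = 2k\rbrace = P\lbrace \max_{0\le s\le t} \mathcal{T}(s) < \beta\ |\ V(0) = c, N(t) = 2k-1\rbrace + \binom{2k}{k}\frac{(c^2t^2-\beta^2)^k}{(2ct)^{2k}},$$
which is the claimed formula once (\ref{rip+d}) is inserted for the first summand (this is exactly the alternative form already anticipated in (\ref{introrip-p})).

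The argument is almost entirely bookkeeping; the one place to be attentive is the matching of the constant $\binom{2k}{k}2^{-2k}$ produced by the endpoint of the weighted integral with the mass of the singular component, since it is precisely this cancellation that makes the final expression clean. A more pedestrian alternative would integrate (\ref{legge-p}) directly, writing $(ct-x)$ as $ct$ minus $x$ and reusing (\ref{intperrip}) with $k\mapsto k-1$ for the $ct$-part; this produces the same cancellation but with heavier arithmetic and no conceptual gain.
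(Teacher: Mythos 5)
Your proposal is correct and follows essentially the same route as the paper: add the atom $\binom{2k}{k}2^{-2k}$ to the integral of the density (\ref{legge-p}), split $(ct-x)$ into its two parts (your factorization of the density as $(1-x/(ct))$ times the $V(0)=c$, $N(t)=2k-1$ density is just a repackaging of the paper's split $ct\int(c^2t^2-x^2)^{k-1}dx-\int x(c^2t^2-x^2)^{k-1}dx$), reuse (\ref{intperrip})/(\ref{rip+d}) for the first piece and the substitution $u=c^2t^2-x^2$ for the second, and observe the same cancellation of the endpoint constant against the singular mass. All the constants check out.
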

We note that the first term of (\ref{rip-d}) coincides with the cumulative distribution function 
\\$P\lbrace \max_{0\le s\le t} \mathcal{T}(s) < \beta\ |\ V(0) = c, N(t) = 2k-1\rbrace $.

\bigskip

\begin{proof}
$$P\lbrace \max_{0\le s\le t} \mathcal{T}(s) < \beta\ |\ V(0) =- c, N(t) = 2k\rbrace  = $$
$$=P\lbrace \max_{0\le s\le t} \mathcal{T}(s) =0\ |\ V(0) =- c, N(t) = 2k\rbrace  + \frac{2(2k)!}{k!(k-1)!(2ct)^{2k}} \int_0^\beta (c^2t^2 - x^2)^{k-1}(ct-x) dx =$$
$$ = \binom{2k}{k}\frac{1}{2^{2k}} +  \frac{2(2k)!}{k!(k-1)!(2ct)^{2k}} \Biggl( ct \int_0^\beta (c^2t^2 - x^2)^{k-1} dx -\int_0^\beta x(c^2t^2 - x^2)^{k-1} dx \Biggr)  = $$
(by applying formula (\ref{intperrip}) suitably adapted)
$$  =  \binom{2k}{k}\frac{1}{2^{2k}} +  \frac{(2k)!}{k!(k-1)!(2ct)^{2k-1}}  \beta\frac{(2ct)^{2k-2}(k-1)!^2}{(2k-1)!} \sum_{j=0}^{k-1} \Bigl( \frac{c^2t^2-\beta^2}{c^2t^2}\Bigr)^j \frac{1}{2^{2j}}\binom{2j}{j}  + $$
$$   +  \frac{2(2k)!}{k!(k-1)!(2ct)^{2k}} \Bigl( \frac{(c^2t^2-\beta^2)^k - (ct)^{2k}}{2k}\Bigr) =$$
$$ = \binom{2k}{k}\frac{1}{2^{2k}} + \frac{\beta}{ct} \sum_{j=0}^{k-1} \Bigl( \frac{c^2t^2-\beta^2}{c^2t^2}\Bigr)^j \frac{1}{2^{2j}}\binom{2j}{j} +  \frac{(2k)!(c^2t^2-\beta^2)^k }{k!^2 (2ct)^{2k}} -  \frac{(2k)!}{k!k!2^{2k}} = $$ 
$$ = \frac{\beta}{ct} \sum_{j=0}^{k-1} \Bigl( \frac{c^2t^2-\beta^2}{c^2t^2}\Bigr)^j \frac{1}{2^{2j}}\binom{2j}{j} +  \frac{(c^2t^2-\beta^2)^k }{(2ct)^{2k}}\binom{2k}{k}$$
\end{proof}

For $k = 1,2$ we have that
$$P\lbrace \max_{0\le s\le t} \mathcal{T}(s) < \beta\ |\ V(0) = -c, N(t) = 2\rbrace = \frac{\beta}{ct} + \frac{(c^2t^2-\beta^2)}{2c^2t^2}$$
$$P\lbrace \max_{0\le s\le t} \mathcal{T}(s) < \beta\ |\ V(0) = -c, N(t) = 4\rbrace = $$
$$ = \frac{\beta}{ct}\Bigl(1+ \frac{c^2t^2- \beta^2}{2c^2t^2}\Bigr) + \frac{3}{2^3}\Bigl( \frac{c^2t^2- \beta^2}{c^2t^2}\Bigr)^2 = \frac{1}{2^3c^3t^3}\Bigl[ 3(c^2t^2-\beta^2)^2 +4ct\beta(3c^2t^2 -\beta^2) \Bigr]$$
which coincide with the corresponding formulas of table 3 of Orsingher (1990), for $c = 1$.

\bigskip

\begin{corollary}
Let $\lbrace \mathcal{T}(t) \rbrace_{t\ge0}$ be a symmetric telegraph process, then
$$P\lbrace \max_{0\le s\le t} \mathcal{T}(s) < \beta\ |\ V(0) = -c, N(t) = 2k+1\rbrace = $$
\begin{equation}\label{rip-d}
=\frac{\beta}{ct} \sum_{j=0}^{k-1} \Bigl( \frac{c^2t^2-\beta^2}{c^2t^2}\Bigr)^j \frac{1}{2^{2j}}\binom{2j}{j} +  \frac{(2k)!}{k!(k+1)!}\frac{(c^2t^2-\beta^2)^k }{2(2ct)^{2k}}\Bigl(\frac{\beta}{ct} +(2k+1) \Bigr)
\end{equation}
	for $0<\beta < ct,\ k \in \mathbb{N}_0$.
\end{corollary}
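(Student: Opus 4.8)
The plan is to integrate the density produced by Theorem~\ref{teorema-d} over $(0,\beta)$ and to add back the singular mass \eqref{singolarita-d} located at the origin, exactly in the spirit of the proof of the preceding corollary. Concretely, one has to evaluate
\begin{multline*}
P\bigl\{\max\nolimits_{0\le s\le t}\mathcal{T}(s)<\beta\,\big|\,V(0)=-c,\,N(t)=2k+1\bigr\}
=\binom{2k+1}{k}\frac{1}{2^{2k+1}}\\
+\frac{(2k+1)!}{(k-1)!(k+1)!(2ct)^{2k}}\,J_{k-1}
+\frac{(2k+1)!}{k!(k+1)!(2ct)^{2k+1}}\,K_{k},
\end{multline*}
where $J_{k-1}=\int_0^\beta(c^2t^2-x^2)^{k-1}(ct-x)\,dx$ and $K_{k}=\int_0^\beta(c^2t^2-x^2)^{k}\,dx$. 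Both integrals are already at our disposal. The integral $J_{k-1}$ was essentially computed inside the proof of the preceding corollary: split it as $ct\int_0^\beta(c^2t^2-x^2)^{k-1}dx-\int_0^\beta x(c^2t^2-x^2)^{k-1}dx$, treat the second piece with the elementary primitive of $x(c^2t^2-x^2)^{k-1}$ and the first with \eqref{intperrip} applied at exponent $k-1$. The integral $K_k$ is given directly by \eqref{intperrip}. Substituting these and simplifying the binomial prefactors is then the whole content of the proof.

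Put $S_{k-1}=\sum_{j=0}^{k-1}\bigl(\tfrac{c^2t^2-\beta^2}{c^2t^2}\bigr)^{j}\tfrac{1}{2^{2j}}\binom{2j}{j}$, and recall $\tfrac{(c^2t^2-\beta^2)^k}{(2ct)^{2k}}=\tfrac{1}{2^{2k}}\bigl(\tfrac{c^2t^2-\beta^2}{c^2t^2}\bigr)^{k}$. The simplification runs in three short steps. First, since $\tfrac{(2k+1)!}{(k-1)!(k+1)!}=\tfrac{2k+1}{2(k+1)}\cdot\tfrac{2(2k)!}{k!(k-1)!}$, the $J_{k-1}$--term equals $\tfrac{2k+1}{2(k+1)}$ times $\tfrac{\beta}{ct}S_{k-1}+\binom{2k}{k}\tfrac{(c^2t^2-\beta^2)^k}{(2ct)^{2k}}-\binom{2k}{k}\tfrac{1}{2^{2k}}$, i.e. $\tfrac{2k+1}{2(k+1)}$ times the right-hand side of \eqref{rip-p} with the atom \eqref{singolarita-p} removed. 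Second, the atom satisfies $\binom{2k+1}{k}\tfrac{1}{2^{2k+1}}=\tfrac{2k+1}{2(k+1)}\binom{2k}{k}\tfrac{1}{2^{2k}}$, so it cancels precisely the negative term left by the $J_{k-1}$--piece. Third, the $K_k$--term equals $\tfrac{1}{2(k+1)}\tfrac{\beta}{ct}\sum_{j=0}^{k}\bigl(\tfrac{c^2t^2-\beta^2}{c^2t^2}\bigr)^{j}\tfrac{1}{2^{2j}}\binom{2j}{j}$, which is $\tfrac{1}{2(k+1)}\bigl(\tfrac{\beta}{ct}S_{k-1}+\tfrac{\beta}{ct}\binom{2k}{k}\tfrac{(c^2t^2-\beta^2)^k}{(2ct)^{2k}}\bigr)$. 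Adding the survivors of the first two steps to this last contribution, the coefficients of $\tfrac{\beta}{ct}S_{k-1}$ add up to $\tfrac{2k+1}{2(k+1)}+\tfrac{1}{2(k+1)}=1$, while the remaining two terms collapse to $\binom{2k}{k}\tfrac{1}{2(k+1)}\tfrac{(c^2t^2-\beta^2)^k}{(2ct)^{2k}}\bigl(2k+1+\tfrac{\beta}{ct}\bigr)$, which is the second term of \eqref{rip-d} because $\binom{2k}{k}\tfrac{1}{2(k+1)}=\tfrac{(2k)!}{2\,k!\,(k+1)!}$.

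A shorter (though less self-contained) route is to integrate the mixture identity \eqref{sommapesata} jointly with \eqref{sing2}: the $(c,2k+1)$--law carries no atom at $0$ and the atom of the $(-c,2k+1)$--law is $\tfrac{2k+1}{2k+2}$ times that of the $(-c,2k)$--law, so the distribution functions inherit the same convex combination, with \eqref{rip-p} and \eqref{rip+d} as ingredients, and collecting terms gives the claim.

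The only real difficulty is the bookkeeping of constants, and two points deserve care. The singular mass \eqref{singolarita-d} is calibrated exactly to absorb the $-\binom{2k}{k}2^{-2k}$ that comes out of the primitive of $x(c^2t^2-x^2)^{k-1}$; and the two summation ranges must be reconciled, since $J_{k-1}$ yields $S_{k-1}$ together with an isolated $j=k$ term whereas \eqref{intperrip} applied to $K_k$ produces the full sum $\sum_{j=0}^{k}$. It is this reconciliation that makes the factor $\tfrac{\beta}{ct}+(2k+1)$ of \eqref{rip-d} appear, namely as $\tfrac{1}{2(k+1)}\tfrac{\beta}{ct}+\tfrac{2k+1}{2(k+1)}$; once the weights are tracked, nothing further is needed.
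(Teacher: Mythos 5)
Your argument is correct, and your bookkeeping of the constants checks out: the ratio $\tfrac{(2k+1)!}{(k-1)!(k+1)!}=\tfrac{2k+1}{2(k+1)}\cdot\tfrac{2(2k)!}{k!(k-1)!}$, the cancellation of the atom $\binom{2k+1}{k}2^{-(2k+1)}=\tfrac{2k+1}{2(k+1)}\binom{2k}{k}2^{-2k}$ against the constant left over from $J_{k-1}$, and the recombination of the two $j=k$ leftovers into the factor $\tfrac{\beta}{ct}+(2k+1)$ all verify. Your primary route, however, is not the one the paper takes: you integrate the density of Theorem~\ref{teorema-d} directly and add the singular mass \eqref{singolarita-d}, redoing the integral computations of the preceding corollary with shifted binomial prefactors. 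The paper instead uses exactly what you relegate to your final paragraph as the ``shorter route'': it observes that \eqref{sommapesata} together with \eqref{sing2} expresses the whole $(-c,\,2k+1)$ law — absolutely continuous part and atom alike — as the convex combination $\tfrac{2k+1}{2k+2}(\cdot)+\tfrac{1}{2k+2}(\cdot)$ of the $(-c,\,2k)$ and $(c,\,2k+1)$ laws, so the distribution functions inherit the same combination and one only has to combine \eqref{rip-p} and \eqref{rip+d}. The paper's proof is a three-line reduction that reuses earlier closed forms; your direct computation is longer but self-contained and serves as an independent consistency check that the density \eqref{legge-d} and the atom \eqref{singolarita-d} together integrate to one and produce \eqref{rip-d}. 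Either is acceptable; only note that for $k=0$ the $J_{k-1}$ contribution and the sum $S_{k-1}$ are empty (the $(k-1)!$ in the denominator is read as $\infty$), which your argument handles implicitly but is worth a word.
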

We note that the first term of (\ref{rip-d}) coincides with the cumulative distribution function 
\\$P\lbrace \max_{0\le s\le t} \mathcal{T}(s) < \beta\ |\ V(0) = c, N(t) = 2k-1\rbrace $.

\bigskip

\begin{proof}
By means of the relationships (\ref{sing2}) and (\ref{sommapesata}) we immediatly obtain that
$$ P\lbrace \max_{0\le s\le t} \mathcal{T}(s) < \beta\ |\ V(0) = -c, N(t) = 2k+1\rbrace =  $$
$$=\frac{2k+1}{2k+2} P\lbrace \max_{0\le s\le t} \mathcal{T}(s) < \beta\ |\ V(0) = -c, N(t) = 2k\rbrace\ +$$
$$+\ \frac{1}{2k+2}P\lbrace \max_{0\le s\le t} \mathcal{T}(s) < \beta\ |\ V(0) = c, N(t) = 2k+1\rbrace$$
and the thesis follows by considering formulas (\ref{rip+d}) and (\ref{rip-p}).
\end{proof}

\bigskip

\begin{remark}
We have the following $m$-th order moments of the conditional distributions
$$\mathbb{E} \Biggl[ \Bigl(\max_{0\le s\le t} \mathcal{T}(s)\Bigr)^m \ |\ V(0) = c, N(t) = 2k+1\Biggr] = \mathbb{E} \Biggl[ \Bigl(\max_{0\le s\le t} \mathcal{T}(s)\Bigr)^m \ |\ V(0) = c, N(t) = 2k+2\Biggr] =$$
\begin{equation}\label{momento+d}
= \frac{(2k+1)!(ct)^m}{2^{2k+1}k!} \frac{\Gamma\Bigl(\frac{m+1}{2}\Bigr)}{\Gamma\Bigl(k+1+\frac{m+1}{2}\Bigr)}  
\end{equation}

In particular, from (\ref{momento+d}) we have that
\begin{equation}\label{mom1+d}
\mathbb{E} \Biggl[ \max_{0\le s\le t} \mathcal{T}(s) \ |\ V(0) = c, N(t) = 2k+1\Biggr] = \binom{2k+1}{k}\frac{ct}{2^{2k+1}}
\end{equation}
\begin{equation}\label{momsec+d}
\mathbb{E} \Biggl[ \Bigl(\max_{0\le s\le t} \mathcal{T}(s)\Bigr)^2 \ |\ V(0) = c, N(t) = 2k+1\Biggr] = \frac{c^2t^2}{2k+3}
\end{equation}
Therefore, the maximum of the conditioned telegraph process, starting with a positive direction, converges in mean square to $0 \ a.s.$ as $k \longrightarrow \infty$.

For $k=0,1,2$ the results of formula (\ref{mom1+d}) coincide with the mean-values presented in table 2 of Orsingher (1990), thus
$$ \mathbb{E} \Biggl[ \max_{0\le s\le t} \mathcal{T}(s) \ |\ V(0) = c, N(t) = 1\Biggr] = \frac{2ct}{2^2} \ \ \  ;\ \ \  \mathbb{E} \Biggl[ \max_{0\le s\le t} \mathcal{T}(s) \ |\ V(0) = c, N(t) = 3\Biggr] = \frac{3ct}{2^3}$$
$$ \mathbb{E} \Biggl[ \max_{0\le s\le t} \mathcal{T}(s) \ |\ V(0) = c, N(t) = 5\Biggr] = \frac{5ct}{2^4} $$
\\

For the unconditional initially positively oriented telegraph process we have, $m\ge 1$
\begin{equation}\label{momento+}
\mathbb{E} \Biggl[ \Bigl(\max_{0\le s\le t} \mathcal{T}(s)\Bigr)^m \ |\ V(0) = c\Biggr] =  e^{-\lambda t}(ct)^m\Bigl(\frac{2}{\lambda t}\Bigr)^{\frac{m-1}{2}} \Gamma\Bigl(\frac{m-1}{2}\Bigr) \Biggl[I_{\frac{m-1}{2}}\bigl(\lambda t \bigr) +I_{\frac{m+1}{2}}\bigl(\lambda t \bigr) \Biggr]
\end{equation}
Hence, for $m = 1$
\begin{equation}
\mathbb{E} \Bigl[ \max_{0\le s\le t} \mathcal{T}(s)\ |\ V(0) = c\Bigr] = e^{-\lambda t}ct \Biggl[I_0\bigl(\lambda t \bigr) +I_1\bigl(\lambda t \bigr) \Biggr]
\end{equation}
\\

For the negatively initially oriented process we have
$$\mathbb{E} \Biggl[ \Bigl(\max_{0\le s\le t} \mathcal{T}(s)\Bigr)^m \ |\ V(0) =- c, N(t) = 2k\Biggr] =$$
\begin{equation}\label{momento-p}
=  \frac{(2k)!(ct)^m}{2^{2k}k!} \Biggl[\frac{\Gamma\Bigl(\frac{m+1}{2}\Bigr)}{\Gamma\Bigl(k+\frac{m+1}{2}\Bigr)} -\frac{\Gamma\Bigl(\frac{m}{2}+1\Bigr)}{\Gamma\Bigl(k+1+\frac{m}{2}\Bigr)}\Biggr]  
\end{equation}
Then, for $m=1$, we have that
\begin{equation}\label{mom1-p}
\mathbb{E} \Biggl[ \max_{0\le s\le t} \mathcal{T}(s) \ |\ V(0) = -c, N(t) = 2k\Biggr] = ct\frac{(2k)!}{k!^22^{2k}}-\frac{ct}{2k+1} =
\end{equation}
$$ =  ct\cdot P\lbrace \max_{0\le s\le t} \mathcal{T}(s) =0\ |\ V(0) =- c, N(t) = 2k \rbrace- \frac{ct}{2k+1}$$
and it is possible to show that there is a maximum in $k=2$, so in the case of four changes of directions.
\\

The moments in the case of an odd number of changes of direction and negatively oriented initial speed can be evaluated from (\ref{momento+d}) and (\ref{momento-p}) by means of the relationship (\ref{sommapesata}).
\hfill $\Diamond$
\end{remark}

\bigskip

\end{document}